\newtheorem{theorem}{Theorem}[section]
\newtheorem{lemma}[theorem]{Lemma}
\newtheorem{corollary}[theorem]{Corollary}
\theoremstyle{definition}
\theoremstyle{remark}
\newtheorem{remark}[theorem]{Remark}
\def\N{{\mathbb N}}
\def\R{{\mathbb R}}
\def\Z{{\mathbb Z}}
\begin{document}

\begin{center}{CALCULATIONS OF THE NORMS FOR MONOTONE OPERATORS ON THE CONES OF FUNCTIONS WITH MONOTONICITY PROPERTIES}
\end{center}

\begin{center}
E.~G.~Bakhtigareeva \\
e-mail:~bakhtigareeva-eg@rudn.ru
\end{center}
\begin{center}
M.~L.~Goldman \\
e-mail:~{seulydia@yandex.ru}
\end{center}

\begin{center}
 Peoples Friendship University of Russia (RUDN University),\\ 6 Miklukho-Maklaya St, Moscow, 117198, Russian Federation
 \end{center} 
\begin{center}

Received:~January 18, 2021
\end{center}

\begin{abstract}
The paper is devoted to the problem of exact calculation of the norms in ideal spaces for monotone operators on the cones of functions with monotonicity properties. We implement a general approach to this problem that covers many concrete variants of monotone operators in ideal spaces and different monotonicity conditions for functions. As applications, we calculate the norms of some integral operators on the cones, associate norms over some cones in Lebesgue spaces, the norms of the dilation operator and embedding operators on weighted Lorentz spaces with general weights. Under some more general conditions, we present order sharp estimates for Hardy-type operators on the cones.
\end{abstract}
2010 Mathematical Subject Classification:~46E30; 47A15; 47A30 \\
 Keywords:~ideal spaces, cone of monotone functions, decreasing rearrangement, monotone operator, conditions for convexity and concavity of the norms and operators, associate norms, dilation operator

\section*{INTRODUCTION. MOTIVATION AND ACTUALITY}

The estimates for monotone operators on cones of nonnegative functions play an important role in different branches of analysis, such as the theory of function spaces, the approximation theory, and applications to the theory of partial differential equations. 

Let us describe shortly the general setting of the problem. Let $\mu$ be a nonnegative continuous Borel measure on $\R_+=(0,\infty); L_0=L_0(\R_+)$ be the set of all Borel-measurable real valued functions, $L_0^+=\left\lbrace f \in L_0 : f\geq 0 \right\rbrace;$
$$\dot{L}_0=\left\lbrace f \in L_0: |f|< \infty \quad \mu-\text{almost everywhere} \right\rbrace, \quad  \dot{L}_0^+=\left\lbrace f \in \dot{L}_0: f\geq 0 \right\rbrace.$$
Let $\Omega \subset \dot{L}_0^+$  be a cone of functions, so that $g \in \Omega, \alpha\geq 0 \Rightarrow \alpha g \in \Omega$. Let $A:\dot{L}_0^+ \rightarrow L_0^+ $  be a monotone operator; $H_{\Omega}(A)$ be the norm of restriction of operator $A$ on $\Omega$  
\begin{equation}\label{eq2.3.1}
H_{\Omega}(A)=\sup_{g \in \Omega}\left[ \left(\int\limits_{0}^{\infty} (Ag)^{q}d\gamma \right)^{1/q} \left(\int\limits_{0}^{\infty} g^{p}d\beta \right)^{-1/p} \right], 
\end{equation}
with $0<p,q\leq\infty;\, \beta, \gamma$ being nonnegative continuous Borel measures on $\R_+.$

Typical examples of cones are the following: 
\begin{equation}\label{eq2.3.2}
\Omega=\Omega_{(k)}= \left\lbrace g: 0\leq g(t)<\infty;\, g(t)t^{-k}\downarrow on \quad  \R_{+}\right\rbrace ,  k \in \R;
\end{equation}
\begin{equation}\label{eq2.3.3}
\Omega=\Omega^{(m)}= \left\lbrace g: 0\leq g(t)<\infty;\, g(t)t^{-m}\uparrow on \quad \R_{+}\right\rbrace ,  m \in \R;
\end{equation}

\begin{equation}\label{eq2.3.4}
\Omega=\Omega_{(k)}^{(m)}=\Omega_{k}\cap \Omega^{m}, m<k;
\end{equation}

 We see that $\Omega_{(0)}$ is the cone of nonnegative decreasing functions, $\Omega^{(0)}$  is the cone of nonnegative increasing functions, $\Omega_{(1)}^{(0)}$ is the cone of quasi-concave functions.

The typical operators connected with this problem are the following: the identity operator $A=I$ ; Hardy type operators  $A=H, A=\tilde{H} ;$ convolutions with nonnegative kernels and so on. For example, the corresponding Hardy- and Copson-type  operators are
\begin{equation}\label{eq2.3.5}
(Hg)(t)=\int\limits_{(0, t]} gd\mu; \quad (\tilde{H}g)(t)=\int\limits_{[t, \infty)}gd\mu.
\end{equation}

\textit{The general problem is to find necessary and sufficient conditions for the finiteness: $H_{\Omega}(A)<\infty$, and to establish order-sharp estimates for  $H_{\Omega}(A)$.}

Note that this problem is important for applications. We illustrate it by some examples from different branches of analysis.

\subsection*{Integration theory}\label{subsec2.1.1}
Let $\ddot{L}_0=\ddot{L}_0(\R^n)$  be the subspace of all functions $f: \R^n \rightarrow \R$  measurable with respect to the $n$-dimensional Lebesgue measure $\mu_n$ , they are finite  $\mu_n$-almost everywhere, and such that for $f \in \ddot{L}_0$  the distribution function $\lambda_f$ is not identical to infinity, where 
\begin{equation}\label{eq2.3.6}
\lambda_f(y)=\mu_n\left\lbrace x \in \R^n: |f(x)|>y\right\rbrace, y \in \R_+.
\end{equation}
Then $\lambda_f \in \Omega_{(0)}, \lambda_f(y)\rightarrow 0 \,(y\rightarrow +\infty).$ Let $f^{\ast} \in \Omega_{(0)}$ 
be the \textit{decreasing rearrangement} of the function $f,$ i.e. $f^{\ast}$  is a left-continuous inverse function for the positive decreasing function $\lambda_f,$ namely

\begin{equation}\label{eq2.3.7}
f^{\ast}(t)=\inf \left\lbrace y \in \R_+: \lambda_f(y)< t\right\rbrace, t \in \R_+.
\end{equation}
For  $f^{\ast} \in L_1(0,t), t \in \R_+,$ we define the \textit{elementary maximal function} $f^{\ast\ast}$: 
\begin{equation}
f^{\ast\ast}(t)=t^{-1}\int\limits_{0}^{t}f^{\ast}d\tau.
\label{eq2.3.8}
\end{equation}
It is easy to see that  $f^{\ast\ast}\downarrow, tf^{\ast\ast}\uparrow$ , so that $f^{\ast\ast} \in \Omega_{(0)}^{(-1)}.$ These definitions, and the relations between $f^{\ast}$ and $f^{\ast\ast}$  are considered in details in the books \cite{BL}, \cite{BS}, and \cite{KPS}. 

It is well known that functions $f$ and $f^{\ast}$  are equimeasurable: they have equal distribution functions. 
 Therefore, integral properties of $f$ are determined by   $f^{\ast} \in \Omega_{(0)}$. For example,

$$\int_{\R^n} |f|^p d\mu_n=\int_{\R_+}(f^{\ast})^pd\mu_1, p>0.$$

Let  $Mf$ be the Hardy- Littlewood maximal function for  $f \in  L_1^{loc}(\R^n)$:
\begin{equation}\label{eq2.3.9}
(Mf)(x) =\sup \left\lbrace \mu_n(Q)^{-1}\int_{Q} |f| d\mu_n : x \in Q\right\rbrace, x \in \R^n.
\end{equation}

Here, the supremum is taken over all cubes $Q$  with sides parallel to the axes containing the given point $x$. It is well known that integral properties of the maximal function play important role in problems of functional series theory, Fourier analysis, approximation theory and so on (see \cite{BS}). Integral properties of $Mf$ are determined by $ f^{\ast\ast} \in \Omega_{(0)}^{(-1)}$. For example,

$$\int_{\R^n} |Mf|^pd\mu_n \cong\int_{\R_+}(f^{\ast\ast})^pd\mu_1, p>0.$$

The cause is related to the following fundamental two-sided estimate:  $(Mf)^{*} \cong f^{**}$, see \\
 \cite[Ch. 2]{BS}. Therefore, when we use decreasing rearrangements in study of integral properties of functions (maximal functions), the problems are reduced to the corresponding integral properties on some cones of monotone functions.

\subsection*{Embedding theory of function spaces}\label{subsec2.1.2}
Here we consider some examples appearing in the embedding theory of function spaces.

\begin{center}
	\textbf{Weighted Lorentz spaces with general weights} 
\end{center}

Let us recall two main variants of weighted Lorentz spaces with general weights $0<v,w \in \dot{L}_0^+$:
\begin{equation}\label{eq2.3.10}
\Lambda_{p, v}=\left\lbrace f: ||f||_\Lambda=\left( \int\limits_{0}^{\infty} (f^{\ast})^p vdt\right)^{1/p}<\infty \right\rbrace, 
\end{equation}

\begin{equation}\label{eq2.3.11}
\Gamma_{q, w}=\left\lbrace f: ||f||_\Gamma=\left( \int\limits_{0}^{\infty} (f^{\ast\ast})^q wdt\right)^{1/q}<\infty \right\rbrace. 
\end{equation}

Here $0<p,q <\infty.$  Classical Lorentz spaces correspond to power weights; about general properties and recent developments in the theory of these spaces see \cite{BL}, \cite{BS}, \cite{CPSS}, \cite{CRS}, \cite{FiR}-\cite{GPS}, \cite{LiK}.

Obviously, $\Gamma_{p, v}\subset \Lambda_{p, v}$ because $f^{\ast} \leq f^{\ast\ast}$. From the above definitions, it follows easily that 

\begin{equation}\label{eq2.3.12}
\Lambda_{p, v}\subset \Lambda_{q, w} \Leftrightarrow G_{\Omega_{(0)}}(p,q)<\infty, 0<p,q <\infty,
\end{equation}

where for the cone $\Omega$ we define
\begin{equation}
 G_{\Omega}(p,q)=\sup_{g \in \Omega} \left[ \left( \int\limits_{0}^{\infty}g^qwdt\right)^{1/q}\left( \int\limits_{0}^{\infty}g^pvdt\right)^{-1/p}\right].
 \label{eq2.3.13}
\end{equation}

By the same reasons 

\begin{equation}\label{eq2.3.14}
\Gamma_{p, v}\subset \Gamma_{q, w} \Leftrightarrow G_{\Omega_{(0)}^{(-1)}}(p,q)<\infty, 0<p,q <\infty.
\end{equation}

Further, (\ref{eq2.3.8}), (\ref{eq2.3.10}), and (\ref{eq2.3.11}) imply
\begin{equation}\label{eq2.3.15}
\Lambda_{p, v}\subset \Gamma_{q, w} \Leftrightarrow H_{\Omega_{(0)}}(p,q)<\infty, 0<p,q <\infty,
\end{equation}

where
\begin{equation}\label{eq2.3.16}
H_{\Omega_{(0)}}(p,q)=\sup_{g \in \Omega_{(0)}} \left[ \left( \int\limits_{0}^{\infty}\left(\int\limits_{0}^{t}gd\tau\right)^q t^{-q}wdt\right)^{1/q}\left( \int\limits_{0}^{\infty}g^pvdt\right)^{-1/p}\right].
	\end{equation}
	
	Therefore, the integral properties of the identity operator, and Hardy type operators on the cone of decreasing functions  play a significant role in embedding problems for Lorentz spaces.
	
	\begin{center}
		\textbf{Besov spaces with generalized smoothness}
	\end{center}

Let $f \in L_p(\R^n), 1\leq p\leq \infty.$ Consider the modulus of continuity of order $k$ in  $L_p(\R^n),$ 

\begin{equation}\label{eq2.3.17}
\omega_p^k(f;t)=\sup \left\lbrace ||\Delta_h^k f||_{L_p} : h \in \R^n,\, |h|\leq t\right\rbrace, t \in \R_+.
	\end{equation}
	
Introduce Besov spaces with generalized smoothness:

\begin{equation}\label{eq2.3.18}
B_{p\theta}^{v(\cdot)}(\R^n)=\left\lbrace f \in L_p: ||f||_B=||f||_{L_p}+||f||_b<\infty\right\rbrace,
\end{equation}	

\begin{equation}\label{eq2.3.19}
||f||_b=\left( \int\limits_{0}^{\infty}\omega_p^k(f;t)^{\theta}v(t)dt\right)^{{1}/{\theta}},
\end{equation}
where $0<\theta<\infty.$ Classical Besov space  $B_{p\theta}^{\alpha}(\R^n)$ corresponds to the power weight $v(t)=t^{-\alpha\theta-1}$, $0<\alpha<k,$ see S.M. Nikolskii \cite[Ch. 4]{Nik}. Different variants of Besov spaces with generalized smoothness were considered in the papers of M.Z. Berkolaiko, A.S. Dzhafarov, A. Gogatishvili, M.L. Goldman, D. Haroske, G.A. Kalyabin, H.-G. Leopold, P.I. Lizorkin, S. Moura, Yu.V. Netrusov, B. Opic, P. Oswald, W. Sickel, and many others ; see surveys and references in \cite{FL},  \cite{G1}, \cite{KL}.

Let us note that the embedding problems for Besov spaces may be reduced to the estimates of some positive operators on the cone  $\Omega_{(k)}^{(0)}$. Indeed, it is well known that  

$$\Omega_{(k)}^{(0)}\approx\Omega\equiv \left\lbrace h(t)=\omega_p^k(f;t); f \in L_p(\R^n)\right\rbrace.$$

Here the equivalence means that functions from these cones are pointwise comparable, so that there exists a constant  $c=c(k) \in [1,\infty)$:

$$h_1 \in \Omega \Rightarrow \exists h_2 \in \Omega_{(k)}^{(0)};\quad  h_1  \in \Omega_{(k)}^{(0)} \Rightarrow \exists h_2 \in \Omega: c^{-1}\leq h_1 h_2^{-1}\leq c.$$

Therefore, we can replace $\Omega$  by $\Omega_{(k)}^{(0)}$ in the considerations below. 
Let us present some examples. For $p=q$

 \begin{equation}
 B_{p\theta}^{v(\cdot)}(\R^n) \subset B_{q\tau}^{w(\cdot)}(\R^n) \Leftrightarrow G_{\Omega_{(k)}^{(0)}} (\theta,\tau)<\infty,
 \label{eq2.3.20}
 \end{equation}	
 

Let $1\leq p < q\leq \infty,\, k>n(\frac{1}{p}-\frac{1}{q}).$ Then, 

 \begin{equation}\label{eq2.1.21}
B_{p\theta}^{v(\cdot)}(\R^n) \subset L_q(\R^n) \Leftrightarrow \tilde{G}_{\Omega_{(k)}^{(0)}}<\infty, 
\end{equation}

where 	

 \begin{equation}\label{eq2.1.22}
\tilde{G}_{\Omega_{(k)}^{(0)}}=\sup_{g \in \Omega_{(k)}^{(0)}} \left[ \left( \int\limits_{0}^{\infty}\left[ g(t)t^{-\alpha}\right] ^{q^{\ast}} \frac{dt}{t}\right)^{{1}/{q^{\ast}}}\left(\int\limits_{0}^{\infty}g(t)^{\theta}vdt\right)^{-{1}/{\theta}}\right], 
\end{equation}

\begin{equation}\label{eq2.1.23}
q^{\ast}=q,\, 1\leq q<\infty; \quad q^{\ast}=1,\, q=\infty;\, \alpha= n\left( \frac{1}{p}-\frac{1}{q}\right). 
	\end{equation}
	
	Last conclusion is based on the following sharp estimate (see \cite{G1}, \cite{He}, \cite{U1}, and \cite{U2}):
	for  $1\leq p<q\leq\infty, c=c\left( p, q, k, n\right) \in \R_+,$
	
	\begin{equation}\label{eq2.1.24}
	||f||_{L_q}\leq c\left\lbrace \left( \int\limits_{0}^{\infty}\left[ \omega^k_p(f;t)t^{-\alpha}\right] ^{q^{\ast}} \frac{dt}{t}\right)^{{1}/{q^{\ast}}}+||f||_{L_p}\right\rbrace.
		\end{equation}
		
	\begin{remark} 	The embedding problems (\ref{eq2.3.20}) for $1\leq p, q\leq \infty$, (\ref{eq2.1.21}) were studied in \cite{G1}-\cite{G6}, where the criteria for embeddings in explicit form were established. They stimulated our interest to the estimates for monotone operators on different cones of functions with monotonicity conditions. 
\end{remark}

	\subsection*{Interpolation Theory}
	
	Let  $\left\lbrace A_0, A_1 \right\rbrace $ be a so-called Banach pare (see \cite{KPS}), so that we can add the elements from $A_0$ and $A_1.$ For  $a \in A_0 + A_1$ the famous Peetre's  $K$-functional is defined as
$$K(t, a)\equiv K(t, a; A_0, A_1)=\inf_{a_0+a_1=a} \left\lbrace ||a_0||_{A_0}+t||a_1||_{A_1}\right\rbrace.$$	

Here the infimum is taken over all representations $a_0+a_1=a, a_0 \in A_0, a_1 \in A_1.$

It is obvious that $K(\cdot , a) \in \Omega_{(1)}^{(0)}.$ For 
$v \in L_0^+$ we define the intermediate space 

$$(A_0, A_1)_{v, q}=\left\lbrace a \in A_0+A_1: ||a||_{v, q}=\left( \int_{0}^{\infty}K(t, a)^qv(t)dt\right)^{1/q} <\infty\right\rbrace,$$



These spaces play a significant role in real method of interpolation, see books \cite{BL}, \cite{BS}, \cite{KPS}, \cite{Tri} and references there. The classical case considered by J. Peetre corresponds to $v(t)=t^{-\theta q-1}, 0<\theta <1.$ Many authors (see some names below) studied general spaces. 

When we study the relations between different intermediate spaces, we have to invoke the results on the estimates on the cone  $\Omega_{(1)}^{(0)}.$

The question arises about relationship between the estimates that are valid on the cones with properties of monotonicity and, for example, the estimates on the set of all functions from $\dot{L}_0^+.$ In Section 2 we demonstrate  that such difference can be essential.

These actual problems were actively studied recently by many researchers. In this context, we mention here some names of researchers in alphabetical order:

M. Arino, S. Astashkin, C. and G. Bennett, E. Berezhnoi, J. Bergh, O. Besov, S. Bloom, N. Bokayev, Y. Brudnyi, V. Burenkov, M. Carro, A. Cianchi,  A. Gogatishvili, M. Goldman, K.-G. Grosse-Erdmann, D. Haroske, H. Heinig, S. Janson, G. Kalyabin, A. Kaminska, G. Karadzhov, R. Kerman, V. Kolyada, N. Krugljak, A. Kufner, L. Maligranda, J. Martin,   M. Milman, B. Muckenhoupt, Yu. Netrusov, J. Neves, E. Nursultanov, V. Ovchinnikov, R. Oinarov, K. Oskolkov, B. Opic, L.-E. Persson, L. Pick, D. Prokhorov, E. Sawyer, R. Sharpley, V. Sickel, G. Sinnamon, J. Soria, V. Stepanov,  S. Tikhonov, H. Triebel, P. Ulyanov,...

The paper is organized as follows. In Section 1 we present the survey of general results concerning exact calculations of the norms for monotone operators on the cones of functions with monotonicity conditions in general ideal spaces. These results modify the approach developed in paper \cite{BG} by V. Burenkov and M. Goldman. Section 2 contains applications of these approaches for calculations of the norms for different concrete operators such as integral operators on the cones, the dilation operator and embedding operators in weighted Lorentz spaces with general weights. In Section 3 we present (without proves) the order-sharp estimates for Hardy-type operators on the cones in wider set of parameters than in Section 2.

\section{CALCULATIONS OF THE NORMS OF MONOTONE OPERATOR ON THE CONES}\label{sec1}

\subsection{Some general theorems on the cones of functions with monotonicity properties}\label{sbsec1.1}

Let $(M, \varSigma_{M}, \beta),\, (N, \varSigma_{N}, \gamma)$  be measure spaces with nonnegative $\sigma$-additive full measures $\beta, \gamma;\,$ $S(M, \varSigma_{M}, \beta),\, S(N, \varSigma_{N}, \gamma)$ be the spaces of real-valued measurable functions. First, we recall the concept of the ideal space (shortly: IS) $X \subset S(M, \varSigma_{M}, \beta)$ with the  (quasi)norm $\|\cdot\|_X$ (see \cite{BaG}).

The space $X$ is called an IS if the following conditions are satisfied:

\begin{align*}
(B1)\quad \|f\|_X=0 & \Leftrightarrow f=0 \quad (\beta -a.e.), \quad  \|\alpha f\|_X= \alpha \|f\|_X, \, \alpha \geq 0; \\
& \exists C \in [1, \infty): \|f+g\|_X \leq C\left(\|f\|_X+ \|g\|_X\right) ;\\
(B2)\quad 0\leq f\leq g & \quad (\beta -a.e.)\Rightarrow \|f\|_X \leq \|g\|_X;\\
(B3)\quad 0\leq f_m \uparrow f & \quad (\beta -a.e.)\Rightarrow \|f_m\|_X \uparrow \|f\|_X;\\
(B4)\quad \|f\|_X<\infty & \Rightarrow   |f|< \infty \quad (\beta -a.e.).
\end{align*}

The space $X$ is a normed Banach space if $C=1$ in the triangle inequality,  and it is a quasi-normed quasi-Banach space if $C>1$.

We say that the (quasi)norm  in the ideal space  $X \subset S(M, \varSigma_{M}, \beta)$   is \textit{order-continuous} if 

\begin{equation}\label{eq1.1.1}
\left\lbrace x_{m} \in X, m \in \N;\, 0\leq x_{m}\downarrow 0 \quad \beta-\text{a.e.}\right\rbrace \Rightarrow \|x_{m}\|_{X}\downarrow 0.
\end{equation}

An ideal space $X \subset S(M, \varSigma_{M}, \beta)$ is $l_{p}$-concave for $p \in \R_{+}$ if

\begin{equation}\label{eq1.1.2}
\left( \sum_{m} \|x_{m}\|_{X}^{p}\right)^{1/p} \leq \left\| \left( \sum_{m}|x_{m}|^{p}\right)^{1/p}\right\| _{X}.
\end{equation}

An ideal space $Y \subset S(N, \varSigma_{N}, \gamma)$ is $l_{q}$-convex for $q \in \R_{+}$ if
\begin{equation}\label{eq1.1.3}
\left\|  \left( \sum_{m}|y_{m}|^{q}\right)^{1/q}\right\|_{Y}\leq \left( \sum\limits_{m}||y_{m}||_{Y}^{q}\right)^{1/q}.
\end{equation}

It means that the convergence of the series in the right-hand side of (\ref{eq1.1.2}) or (\ref{eq1.1.3}) implies the convergence of the series in the left-hand side, and the corresponding inequalities hold.

Note that any normed ideal space is $l_{1}$-convex, and $l_{q}$-convexity for $0<q<1$  implies the triangle inequality in the form

\begin{equation}\label{eq1.1.4}
\|f+g\|_{Y}\leq \left( \|f\|_{Y}^{q}+ \|g\|_{Y}^{q}\right)^{1/q} \leq 2^{1/q -1}\left( \|f\|_{Y}+\|g\|_{Y}\right).
\end{equation}

Also, note that  $Y=L_{q}(N, \gamma), \, 0<q<\infty$, is $l_{\rho}$-convex for any $\rho \in (0, q]$  (see Lemma \ref{lm1.3.7} below), and it is  $l_{p}$-concave for any $p \in [q, \infty).$

Let $D \subset X$ be a cone of nonnegative functions such that 
$$f, g \in D;\, c_1, c_2 \geq 0 \Rightarrow c_1 f+c_2 g \in D.$$

An operator $T: D\rightarrow Y$    is called   $l_{r}$-convex for $0<r<\infty$ if for any $f_{m} \in D, m \in \Z$ such that  $\left(\sum\limits_{m}f_{m}^{r}\right)^{1/r} \in D,$

\begin{equation}\label{eq1.1.5}
\left| T\left[  \left( \sum\limits_{m}f_{m}^{r}\right)^{1/r}\right]\right|  \leq \left( \sum\limits_{m}|Tf_{m}|^{r}\right)^{1/r} 
\end{equation}
$\gamma$-almost everywhere on  $N$;  and for all  $f \in D; \alpha\geq 0\Rightarrow T[\alpha f]=\alpha T[f].$

Note that $l_{1}$-convexity of operator $T$ coincides with its countable sublinearity:

$$
\left| T\left[\left(\sum\limits_{m}f_{m}\right)\right]\right|  \leq \left( \sum\limits_{m}|Tf_{m}|\right). 
$$

An operator $T$ is called \textit{ monotone} if
\begin{equation}\label{eq1.1.6}
\left\lbrace f, g \in D;\, 0\leq f \leq g \quad \beta-\text{a.e.}\right\rbrace \Rightarrow \left\lbrace  0\leq Tf \leq Tg \quad \gamma-\text{a.e.}\right\rbrace. 
\end{equation}

An example of   $l_{r}$-convex monotone operator gives the operator 

$$
T\left[f\right] = \left( L[f^{r}]\right)^{1/r}, 
$$

where  $L$ is a countable sublinear monotone operator. Moreover, this formula gives the correspondence between  $l_{r}$- convex and countable sublinear operators.

We will consider the case   $M=J:=(a,b),\, -\infty \leq a , b \leq \infty,$ with nonnegative continuous Borel measure  $\beta,$ and restrictions of operators on the following cones of nonnegative decreasing left-continuous functions on  $J:=(a,b):$

\begin{align}\label{eq1.1.7}
\Omega &= \left\lbrace g \in X:\, 0\leq g \downarrow ;\, g(t)=g(t-0),\, t \in (a,b)\right\rbrace,\nonumber\\ 
\dot{\Omega}&= \left\lbrace g \in \Omega:\, \lim\limits_{t \rightarrow b-0} g(t)=0\right\rbrace.
\end{align}

So, we define the norms of such restrictions 

\begin{equation}\label{eq1.1.8}
\|T\|_{\Omega} =\sup \left\lbrace \|T[g]\|_{Y}:  g \in \Omega,\, \|g\|_{X}\leq 1\right\rbrace,
\end{equation}

\begin{equation}\label{eq1.1.9}
\|T\|_{\dot{\Omega}} =\sup \left\lbrace \|T[g]\|_{Y}:  g \in \dot{\Omega},\, \|g\|_{X}\leq 1\right\rbrace.
\end{equation}

Denote 

\begin{equation}\label{eq1.1.10}
\dot{\Omega}_{0}:=\left\lbrace \chi_{(a, t]}: a<t<b\right\rbrace,\,  \Omega_{0}:=\dot{\Omega}_{0}\bigcup \chi_{(a, b)}; 
\end{equation} 

\begin{equation}\label{eq1.1.11}
F(x, t) =T\left[\chi_{(a, t]}\right](x),\, a<t<b;\quad  F(x, b) =T\left[\chi_{(a, b)}\right] (x).
\end{equation} 

\begin{theorem}\label{th1.1.1}
	Let $0<p \leq q\leq r<\infty;\, X \subset S(J, \beta)$ be an ideal    $l_{p}$-concave space with order-continuous (quasi)norm; 
	\begin{equation}\label{eq1.1.12}
	||\chi_{(a, b)}||_{X}=\infty;
	\end{equation} 
$Y \subset S(N, \gamma)$ be an ideal $l_{q}$-convex space , and $T: \dot{\Omega}\rightarrow Y$ be an $l_{r}$- convex positive operator. Then,

\begin{equation} \label{eq1.1.13}
||T||_{\dot{\Omega}} =||T||_{\dot{\Omega}_{0}}:=\sup\limits_{a<t<b} \left[ ||F(\cdot, t)||_{Y} ||\chi_{(a,t]}(\cdot)||_{X}^{-1}\right].
\end{equation}
\end{theorem}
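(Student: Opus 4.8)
The inequality $\|T\|_{\dot\Omega_0}\le\|T\|_{\dot\Omega}$ is immediate: every building block $\chi_{(a,t]}$ with $a<t<b$ is nonnegative, decreasing, left-continuous and vanishes as $t\to b-0$, so $\dot\Omega_0\subset\dot\Omega$ and the supremum over the smaller set is no larger. All the content is in the reverse estimate $\|T[g]\|_Y\le\|T\|_{\dot\Omega_0}\,\|g\|_X$ for an arbitrary $g\in\dot\Omega$, which I would prove by reducing a general $g$ to the elementary functions $\chi_{(a,t]}$.

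The plan is first to treat a decreasing left-continuous step function $g=\sum_{j}a_j\chi_{(a,t_j]}$ with $a<t_1<\dots<t_n<b$ and $a_j\ge0$, and then to reach a general $g\in\dot\Omega$ by choosing step functions $g_m\uparrow g$ and invoking the monotonicity \eqref{eq1.1.6} of $T$ together with the Fatou property (B3) of $Y$ and the order-continuity \eqref{eq1.1.1} of $X$; the hypothesis \eqref{eq1.1.12} guarantees that only functions vanishing at $b$, i.e.\ the cone $\dot\Omega$ generated by $\dot\Omega_0$, are relevant. The key structural observation is that, since each $\chi_{(a,t_j]}$ is idempotent under powers, the same profile of $g$ is described simultaneously, for every exponent $s$, by $g=\big(\sum_j (c_j^{(s)}\chi_{(a,t_j]})^{s}\big)^{1/s}$ with $\big(c_j^{(s)}\big)^s$ telescoping the levels of $g^s$; this identity is what allows the three distinct exponents $p,q,r$ to be linked.

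The core estimate then chains the three hypotheses in the order dictated by $p\le q\le r$. Writing $F_j:=F(\cdot,t_j)=T[\chi_{(a,t_j]}]$, I would (i) apply the $l_r$-convexity \eqref{eq1.1.5} of $T$ to the $r$-representation of $g$ to obtain $|T[g]|\le\big(\sum_j c_j^r|F_j|^r\big)^{1/r}$ pointwise; (ii) take the $Y$-(quasi)norm, pass pointwise from the $\ell^r$- to the $\ell^q$-combination (legitimate since $q\le r$) and invoke the $l_q$-convexity \eqref{eq1.1.3} of $Y$ to bound it by $\big(\sum_j\|c_jF_j\|_Y^{q}\big)^{1/q}$, after which \eqref{eq1.1.13} gives $\|F_j\|_Y\le\|T\|_{\dot\Omega_0}\,\|\chi_{(a,t_j]}\|_X$; and (iii) apply the $l_p$-concavity \eqref{eq1.1.2} of $X$ to convert $\big(\sum_j\|c_j\chi_{(a,t_j]}\|_X^{p}\big)^{1/p}$ back into $\|g\|_X$. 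The transitions between exponents would use only the elementary monotonicity of the $\ell^s$-quasinorms in $s$.

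The main obstacle is precisely the reconciliation of these three exponents: the coefficients forced by the $l_r$-convexity step are tied to $r$, those natural for the $l_p$-concavity step are tied to $p$, and a naive pointwise passage $\ell^r\to\ell^q\to\ell^p$ overshoots $\|g\|_X$ when $p<q<r$. I expect the resolution to be a preliminary collapse to a single exponent: upgrade $X$ from $l_p$-concave to $l_q$-concave (admissible because $q\ge p$), and then use the canonical representation $T[f]=(L[f^r])^{1/r}$ with $L$ countably sublinear to transfer the whole inequality, through the $r$-concavifications of $X$ and $Y$, to a sublinear problem with space exponents $p/r\le q/r\le1$. In that normalized setting the telescoping identity for the idempotent blocks $\chi_{(a,t_j]}$ renders the $X$-step an exact equality rather than a lossy inequality, so the only delicate point that remains is the $Y$-side, where the nested, hence pointwise ordered, family $F_1\le F_2\le\cdots$ supplied by the monotonicity \eqref{eq1.1.6} of $T$ should be exploited to keep the $l_q$-convexity estimate from losing the constant.
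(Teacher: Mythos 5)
The paper states Theorem \ref{th1.1.1} without proof (it is presented as a modification of the main result of \cite{BG}), so I am comparing your proposal with the Burenkov--Goldman argument the paper invokes. Your skeleton is the right one --- reduce to decreasing step functions, exploit that $\chi_{(a,t_j]}^s=\chi_{(a,t_j]}$ so that $g$ has an exact $s$-representation for every exponent $s$, then chain the $l_r$-convexity of $T$, the $l_q$-convexity of $Y$ and the $l_p$-concavity of $X$ --- and you correctly diagnose the exponent mismatch as the crux. But the device you propose to resolve it is not secured. The ``upgrade of $X$ from $l_p$-concave to $l_q$-concave'' is not an innocent move: the implication ($p$-concave $\Rightarrow$ $q$-concave for $q\ge p$) is a substantive theorem even for Banach lattices, and nothing in (B1)--(B4) or \eqref{eq1.1.2} supplies it for quasi-normed ideal spaces with exponents possibly below $1$; it would itself need proof, and in any case it is not needed. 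The actual resolution is an elementary inequality used twice: for $0<q\le r$, $b_1\ge\dots\ge b_n\ge b_{n+1}=0$ and $0=G_0\le G_1\le\dots\le G_n$,
$$\Bigl(\sum_j(b_j^r-b_{j+1}^r)G_j^r\Bigr)^{1/r}=\Bigl(\sum_j b_j^r(G_j^r-G_{j-1}^r)\Bigr)^{1/r}\le\Bigl(\sum_j b_j^q(G_j^q-G_{j-1}^q)\Bigr)^{1/q}=\Bigl(\sum_j(b_j^q-b_{j+1}^q)G_j^q\Bigr)^{1/q},$$
applied once pointwise with $G_j=F(\cdot,t_j)$ (increasing in $j$ by \eqref{eq1.1.6}) to convert the $r$-coefficients produced by \eqref{eq1.1.5} into the $q$-coefficients needed for \eqref{eq1.1.3}, and once more with $G_j=\|\chi_{(a,t_j]}\|_X$ (increasing by (B2)) to convert the $q$-coefficients into the $p$-coefficients, after which \eqref{eq1.1.2} applied to the exact $p$-representation of $g$ returns $\|g\|_X$ with no loss. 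You sensed that the monotonicity of the $F_j$ must enter on the $Y$-side, but the same mechanism is needed on the $X$-side, and neither application is actually carried out or reduced to a stated lemma.

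Your limiting step also goes the wrong way. If $g_m\uparrow g$ are step minorants, monotonicity gives $Tg_m\le Tg$, hence $\sup_m\|Tg_m\|_Y\le\|Tg\|_Y$: this bounds the approximants by the target, not the target by the approximants, and (B3) applied to the increasing sequence $Tg_m$ does not recover $\|Tg\|_Y$ because nothing forces $\sup_mTg_m=Tg$ ($T$ is only monotone, not order-continuous). The standard fix is to majorize multiplicatively: for $\delta>1$ discretize the range of $g$ on the levels $\delta^k$, $k\in\Z$, obtaining a countable-step function $g_\delta$ with $g\le g_\delta\le\delta g$; since $g_\delta\le\delta g$ still tends to $0$ at $b-0$, it lies in $\dot\Omega$ (an additive discretization from above would leave $\dot\Omega$ and, by \eqref{eq1.1.12} and \eqref{eq1.1.14}, even leave $X$). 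Because the axioms \eqref{eq1.1.2}--\eqref{eq1.1.5} are stated for countable families, the finite-step argument applies verbatim to $g_\delta$, giving $\|Tg\|_Y\le\|Tg_\delta\|_Y\le\|T\|_{\dot{\Omega}_0}\|g_\delta\|_X\le\delta\|T\|_{\dot{\Omega}_0}\|g\|_X$, and one lets $\delta\downarrow1$. As written, your proposal identifies the right obstacles but closes neither of them.
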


\begin{remark}\label{rem1.1.2}
Note that in the case of nondegeneracy 
$$||\chi_{(a, b)}||_{X}=\infty \Rightarrow \Omega=\dot{\Omega} \Rightarrow ||T||_{\Omega}=||T||_{\dot{\Omega}},$$
because 

\begin{equation}\label{eq1.1.14}
\left\lbrace 0\leq g\downarrow,\, \lim\limits_{t\rightarrow b-0} g(t)>0\right\rbrace \Rightarrow g \notin X.
\end{equation}

\end{remark}

It means that $||T||_{\Omega}=||T||_{\dot{\Omega}}$ in case (\ref{eq1.1.12}).	
	
Now, let us consider the case 
\begin{equation}\label{eq1.1.15}
||\chi_{(a, b)}||_{X}<\infty.
\end{equation}
 
 Then $\Omega_{0}\subset \Omega$  and we have the following statement.
 
\begin{theorem}\label{th1.1.3}
	Let $0<p\leq q\leq r<\infty;$ $X \subset S(J, \beta)$   be an ideal $l_{p}$- concave space with order-continuous (quasi)norm, and condition (\ref{eq1.1.15}) be satisfied. Let $Y \subset S(J, \gamma)$  be an ideal $l_{q}$ - convex  space, and  $T:\Omega \rightarrow Y$ be an $l_{r}$- convex positive operator. Then,
	
	\begin{equation}\label{eq1.1.16}
	||T||_{\dot{\Omega}} =||T||_{\dot{\Omega}_{0}}:=\sup\limits_{a<t<b} \left[ ||F(\cdot, t)||_{Y} ||\chi_{(a,t]}(\cdot)||_{X}^{-1}\right],
		\end{equation}
		
	\begin{equation}\label{eq1.1.17}
||T||_{\Omega}=||T||_{\Omega_{0}}:=\max\left\lbrace ||T||_{\dot{\Omega}_{0}},  ||F(\cdot, b)||_{Y} ||\chi_{(a,b)}(\cdot)||_{X}^{-1}\right\rbrace.
			\end{equation}
\end{theorem}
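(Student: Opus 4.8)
The two trivial directions are immediate: since $\dot{\Omega}_0\subset\dot{\Omega}$ and $\Omega_0\subset\Omega$, one has $\|T\|_{\dot{\Omega}_0}\le\|T\|_{\dot{\Omega}}$ and $\|T\|_{\Omega_0}\le\|T\|_{\Omega}$, where the supremum over $\Omega_0=\dot{\Omega}_0\cup\{\chi_{(a,b)}\}$ splits into the supremum over $\dot{\Omega}_0$ (giving $\|T\|_{\dot{\Omega}_0}$) and the single ratio from $\chi_{(a,b)}$, which is exactly the maximum on the right of (\ref{eq1.1.17}). So everything reduces to the two reverse estimates $\|T\|_{\dot{\Omega}}\le\|T\|_{\dot{\Omega}_0}$ and $\|T\|_{\Omega}\le\|T\|_{\Omega_0}$. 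I would obtain both from one per-function estimate: (\ref{eq1.1.16}) is the case of $g\in\dot{\Omega}$ and (\ref{eq1.1.17}) is the case of general $g\in\Omega$ with nonvanishing limit, the only difference being the presence of one extra ``slab'' layer. (In particular the first identity coincides with Theorem \ref{th1.1.1}, whose derivation of (\ref{eq1.1.13}) tests $T$ only against functions of $\dot{\Omega}$ and never uses the value of $\|\chi_{(a,b)}\|_X$.)

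For the estimate itself I would first invoke the representation of an $l_r$-convex positive operator as $T[f]=(L[f^r])^{1/r}$ with $L$ countably sublinear and monotone, so that $T[g]^r=L[g^r]$. Writing the layer-cake decomposition of the decreasing function $g^r$ and isolating its limit $c_\infty:=\lim_{t\to b-0}g(t)$ at $b$ gives, with $\mu:=-d(g^r)\ge 0$,
\[
g^r=c_\infty^r\,\chi_{(a,b)}+\int_{(a,b)}\chi_{(a,s]}\,d\mu(s).
\]
Countable sublinearity of $L$, extended from finite sums to this integral by monotone approximation and the Fatou property (B3), then yields $\gamma$-almost everywhere, using $L[\chi_{(a,s]}]=F(\cdot,s)^r$ and $L[\chi_{(a,b)}]=F(\cdot,b)^r$,
\[
T[g]^r=L[g^r]\le c_\infty^r\,F(\cdot,b)^r+\int_{(a,b)}F(\cdot,s)^r\,d\mu(s).
\]
Thus $T[g]$ is dominated by an $l_r$-superposition of the test responses $F(\cdot,s)$, $s\in(a,b]$, the slab term $s=b$ being present precisely because $c_\infty>0$.

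It then remains to pass this pointwise domination through $\|\cdot\|_Y$ and compare with $\|g\|_X$, and this is where the three structural hypotheses and the ordering $p\le q\le r$ enter. The $l_q$-convexity of $Y$ (with $q\le r$) is meant to convert the $l_r$-superposition of the $F(\cdot,s)$ into a superposition of the scalars $\|F(\cdot,s)\|_Y$, each of which is bounded by its efficiency times $\|\chi_{(a,s]}\|_X$: at most $\|T\|_{\dot{\Omega}_0}\|\chi_{(a,s]}\|_X$ for $s<b$ and at most $(\|F(\cdot,b)\|_Y\|\chi_{(a,b)}\|_X^{-1})\|\chi_{(a,b)}\|_X$ for $s=b$, so every layer carries efficiency at most $M:=\|T\|_{\Omega_0}$, the maximum in (\ref{eq1.1.17}). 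Dually, the $l_p$-concavity of $X$ (with $p\le q$) is what should let the resulting superposition of the $\|\chi_{(a,s]}\|_X$ be reassembled into $\|g\|_X$. Carrying the slab through the same steps produces the two competing ratios, whose maximum is the right side of (\ref{eq1.1.17}); when $c_\infty=0$ the slab is absent and one recovers (\ref{eq1.1.16}). The passage from step functions to arbitrary $g$ would be handled by the order-continuity of $\|\cdot\|_X$ together with (B3).

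The step I expect to be the main obstacle is exactly this fusion with the \emph{sharp} constant $1$ (equality, not mere equivalence). The naive route — discretizing $g^r$ on a geometric grid of its values and chaining the pointwise power-mean inequalities in the order $r\ge q\ge p$ — loses a factor $>1$, since $l_p$-concavity only lower-bounds $\big\|(\sum_k c_k^p\chi_{(a,t_k]})^{1/p}\big\|_X$ whereas the layer-cake forces the exponent $r$ on $g$, and for $p<r$ these disagree. To reach equality one must instead exploit that a positively homogeneous, convexity-compatible functional maximized over the cone of decreasing functions is extremized on its extreme rays, which are precisely the normalized $\chi_{(a,t]}$ and $\chi_{(a,b)}$; passing to the power spaces $X^{[r]},Y^{[r]}$ (which are $l_{p/r}$-concave and $l_{q/r}$-convex with $p/r\le q/r\le 1$, and on which $L$ is genuinely sublinear) is what should calibrate the hypotheses so that this localization to characteristic functions costs nothing. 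Making the localization rigorous for \emph{quasi}-normed $X,Y$, whose unit balls need not be convex, is the delicate point on which the equality ultimately hinges.
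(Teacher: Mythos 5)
You are right that the paper itself never proves Theorem \ref{th1.1.3} (it is imported, with modifications, from \cite{BG}), so the comparison below is with the argument the hypotheses are calibrated for. Your skeleton is correct: the trivial inclusions, the layer-cake representation of $g^r$ with the separate slab $c_\infty^r\chi_{(a,b)}$ producing the extra term in (\ref{eq1.1.17}), the observation that (\ref{eq1.1.16}) is (\ref{eq1.1.13}) again, and the order in which the three structural hypotheses must act. But the proof is incomplete exactly where you suspect, and the defect is worse than ``losing a factor $>1$'': decomposing $g$ \emph{once}, at level $r$, and pushing the coefficients $c_m^r=v_m^r-v_{m+1}^r$ through $\ell_r\hookrightarrow\ell_q\hookrightarrow\ell_p$ ends with $\bigl(\sum_m (v_m^r-v_{m+1}^r)^{p/r}x_m^p\bigr)^{1/p}$, whereas $\ell_p$-concavity of $X$ only reassembles $\bigl(\sum_m (v_m^p-v_{m+1}^p)x_m^p\bigr)^{1/p}\le\|g\|_X$; since $(v_m^r-v_{m+1}^r)^{p/r}\ge v_m^p-v_{m+1}^p$ with unbounded ratio as $v_{m+1}/v_m\to1$, the comparison fails by an arbitrarily large amount as the discretization is refined. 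The missing ingredient is that a decreasing step function on nested intervals admits \emph{simultaneous} layer decompositions $g=\bigl(\sum_m(v_m^\rho-v_{m+1}^\rho)\chi_{(a,s_m]}\bigr)^{1/\rho}$ at every exponent $\rho$, glued together by the scalar lemma: for $0<p\le q$, $v_1\ge\dots\ge v_M\ge v_{M+1}=0$ and $0\le x_1\le\dots\le x_M$,
\begin{equation*}
\Bigl(\sum_{m}(v_m^q-v_{m+1}^q)\,x_m^q\Bigr)^{1/q}\le\Bigl(\sum_{m}(v_m^p-v_{m+1}^p)\,x_m^p\Bigr)^{1/p},
\end{equation*}
with constant exactly $1$ (Abel summation rewrites both sides as $\bigl(\rho\int_0^\infty V(s)^\rho s^{\rho-1}ds\bigr)^{1/\rho}$ with $V$ decreasing, and $u\,V(u)\le\int_0^u V$ shows this is nonincreasing in $\rho$). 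This lemma is applied twice: once pointwise with $x_m=F(x,s_m)$ (increasing in $m$ by monotonicity of $T$) to convert the $r$-decomposition demanded by the $\ell_r$-convexity of $T$ into the $q$-decomposition needed by the $\ell_q$-convexity of $Y$, and once with $x_m=\|\chi_{(a,s_m]}\|_X$ (increasing by (B2)) to convert the resulting $q$-sum into the $p$-sum that $\ell_p$-concavity of $X$ dominates by $\|g\|_X$. The slab rides along as an $(M{+}1)$-st layer with $x_{M+1}=\|\chi_{(a,b)}\|_X$, which is where the maximum in (\ref{eq1.1.17}) comes from.

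Your proposed repair cannot substitute for this lemma. The ratio $\|Tg\|_Y/\|g\|_X$ is neither convex nor concave in $g$, so ``extremized on the extreme rays of the cone'' is not a consequence of positive homogeneity; and as a general principle it would apply verbatim when $p>\min\{q,r\}$, where the conclusion is false --- that is precisely why Section 3 of the paper only obtains the two-sided order estimates (\ref{eq3.13}) in that range. Passing to power spaces $X^{[r]},Y^{[r]}$ relabels the exponents but does not reconcile the $r$-layers with the $p$-layers. The approximation issues you defer (one should approximate $g$ by step functions from \emph{above} and use order-continuity of $\|\cdot\|_X$, since nothing guarantees continuity of $T$ from below) are real but secondary to this gap.
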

 
\subsection{Corollaries}

\begin{corollary}\label{cl1.3.5}
Let $0<p \leq \min \left\lbrace q,r\right\rbrace <\infty;\, X \subset S(J, \beta)$   be an ideal  $l_p$-concave space with order-continuous (quasi)norm, and condition (\ref{eq1.1.12}) be satisfied. Let $Y=L_{q}(N, \gamma),$   and $T$  be an $l_r$-convex positive operator. Then  equality   (\ref{eq1.1.13}) holds.
\end{corollary}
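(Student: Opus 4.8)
The plan is to deduce this statement directly from Theorem~\ref{th1.1.1} by exploiting the freedom in the convexity exponent of the space $Y=L_{q}(N,\gamma)$. The only gap between the hypotheses stated here and those of Theorem~\ref{th1.1.1} is that we are given the weaker ordering $p\leq\min\{q,r\}$ in place of the full chain $p\leq q\leq r$; in particular, the condition $q\leq r$ is not assumed. The crucial observation is that the conclusion (\ref{eq1.1.13}) involves only $X$, $Y$, and the functions $F(\cdot,t)$, and does not mention the convexity exponent of $Y$ at all: that exponent enters Theorem~\ref{th1.1.1} solely as the middle term of the chain $p\leq q\leq r$, serving to link the $l_{p}$-concavity of $X$ to the $l_{r}$-convexity of $T$.

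First I would invoke the fact recorded just before Lemma~\ref{lm1.3.7}, namely that $Y=L_{q}(N,\gamma)$ is $l_{s}$-convex for \emph{every} $s\in(0,q]$, not merely for $s=q$. This is the extra structural information that the abstract hypothesis of Theorem~\ref{th1.1.1} (an arbitrary $l_{q}$-convex $Y$) does not provide, and it is precisely what permits the relaxation of the ordering assumption.

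Next I would set $s:=\min\{q,r\}$ and verify that all hypotheses of Theorem~\ref{th1.1.1} hold with $q$ replaced by $s$. Since $s=\min\{q,r\}\leq q$, the space $Y$ is $l_{s}$-convex by the fact just cited. Since $p\leq\min\{q,r\}=s$ and $s=\min\{q,r\}\leq r$, we obtain the required chain $p\leq s\leq r$. The remaining hypotheses---that $X$ is $l_{p}$-concave with order-continuous (quasi)norm, that $\|\chi_{(a,b)}\|_{X}=\infty$ (condition (\ref{eq1.1.12})), and that $T$ is an $l_{r}$-convex positive operator---are identical to those assumed here and carry over verbatim. Applying Theorem~\ref{th1.1.1} with the exponent $s$ in the role of $q$ then yields equality (\ref{eq1.1.13}), as desired.

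There is no genuine analytic obstacle here; the content of the corollary is the observation that $L_{q}$ possesses a whole interval of admissible convexity exponents, so one may lower the middle exponent from $q$ down to $\min\{q,r\}$ to slot it beneath $r$. The only point requiring care is the bookkeeping check that the single choice $s=\min\{q,r\}$ simultaneously satisfies $s\leq q$ (for the $l_{s}$-convexity of $Y$) and $p\leq s\leq r$ (for the ordering), which it does in both regimes $q\leq r$ and $r<q$.
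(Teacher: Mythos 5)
Your argument is correct and coincides with the paper's own proof: the authors likewise set $\rho=\min\{q,r\}$, observe $p\leq\rho\leq r$, invoke Lemma \ref{lm1.3.7} to conclude that $Y=L_{q}(N,\gamma)$ is $l_{\rho}$-convex, and apply Theorem \ref{th1.1.1} with $\rho$ in place of $q$. No differences worth noting.
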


\begin{corollary}\label{cl1.3.6}
	Let $0<p \leq \min\left\lbrace q,r\right\rbrace <\infty;\, X \subset S(J, \beta)$   be an ideal  $l_p$-concave space with order-continuous (quasi)norm, and condition (\ref{eq1.1.15}) be satisfie. Let $Y=L_{q}(N, \gamma),$   and $T$  be an $l_r$-convex positive operator. Then equalities   (\ref{eq1.1.16}), (\ref{eq1.1.17}) hold.
\end{corollary}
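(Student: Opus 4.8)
The plan is to deduce the statement directly from Theorem \ref{th1.1.3} by exploiting the fact that the Lebesgue space $Y=L_q(N,\gamma)$ carries a whole range of convexity exponents rather than a single one. The only discrepancy between the hypotheses of the corollary and those of the theorem is that Theorem \ref{th1.1.3} demands the ordered chain $p\le q\le r$, with $q$ the convexity index of $Y$, whereas here we assume only $p\le\min\{q,r\}$ and make no comparison between $q$ and $r$. The idea is therefore to replace the convexity index of $Y$ by a suitably chosen value that restores the chain required by the theorem.

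First I would invoke Lemma \ref{lm1.3.7}, by which $L_q(N,\gamma)$ is $l_\rho$-convex for every $\rho\in(0,q]$; thus I am free to regard $Y$ as an $l_\rho$-convex ideal space for any such $\rho$. To apply Theorem \ref{th1.1.3} I need an exponent $\rho$ satisfying simultaneously $p\le\rho\le r$ (the ordering demanded by the theorem, since $X$ is $l_p$-concave and $T$ is $l_r$-convex) together with $\rho\le q$ (so that $Y$ is genuinely $l_\rho$-convex). Equivalently, I need $\rho\in[p,\min\{q,r\}]$. This interval is nonempty \emph{precisely} because the hypothesis $p\le\min\{q,r\}$ holds, so I may take, for instance, $\rho=\min\{q,r\}$.

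With this choice every hypothesis of Theorem \ref{th1.1.3} is met for the triple $(p,\rho,r)$: the space $X$ is $l_p$-concave with order-continuous quasinorm, condition (\ref{eq1.1.15}) holds, $Y=L_q(N,\gamma)$ is $l_\rho$-convex, the operator $T$ is $l_r$-convex and positive, and $p\le\rho\le r$. Theorem \ref{th1.1.3} then yields exactly equalities (\ref{eq1.1.16}) and (\ref{eq1.1.17}). This argument runs in complete parallel to Corollary \ref{cl1.3.5}, the only changes being that the nondegeneracy condition (\ref{eq1.1.12}) there is replaced by (\ref{eq1.1.15}) and that the appeal to Theorem \ref{th1.1.1} is replaced by one to Theorem \ref{th1.1.3}.

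There is no genuine analytic obstacle here; the entire content sits in Lemma \ref{lm1.3.7} and in Theorem \ref{th1.1.3}. The one point worth flagging is that the conclusion must be seen to be independent of the auxiliary exponent $\rho$ used in the reduction. This is immediate, since the right-hand sides of (\ref{eq1.1.16}) and (\ref{eq1.1.17}) involve only the norms $\|\cdot\|_Y$ and $\|\cdot\|_X$ and never reference the convexity index; hence whichever admissible $\rho$ is chosen, the same identities emerge, which is the desired assertion.
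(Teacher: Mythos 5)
Your argument is exactly the paper's: set $\rho=\min\{q,r\}$, use Lemma \ref{lm1.3.7} to see that $L_q(N,\gamma)$ is $l_\rho$-convex, observe $p\le\rho\le r$, and apply Theorem \ref{th1.1.3} with $\rho$ in place of $q$. The proposal is correct and coincides with the paper's proof, including the remark that the conclusion does not depend on the auxiliary exponent.
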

 
 To prove these corollaries we need a lemma concerning properties of convexity for Lebesgue spaces.
 
\begin{lemma}\label{lm1.3.7}
	Let  $0<q<\infty.$ Then $Y=L_{q}(N, \gamma)$    is an ideal  $l_{\rho}$-convex  space for any $\rho \in (0,q].$
\end{lemma}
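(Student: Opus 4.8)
The plan is to reduce the claimed $l_\rho$-convexity inequality (\ref{eq1.1.3}) for $Y=L_q(N,\gamma)$ directly to the ordinary Minkowski (triangle) inequality in a Lebesgue space whose exponent is at least one. Writing out (\ref{eq1.1.3}) explicitly for $L_q$, the assertion to be proved is
\begin{equation}\label{eq:plan1}
\left( \int_N \left( \sum_m |y_m|^{\rho} \right)^{q/\rho} d\gamma \right)^{1/q} \leq \left( \sum_m \left( \int_N |y_m|^{q} d\gamma \right)^{\rho/q} \right)^{1/\rho}.
\end{equation}
Since $\rho \in (0,q]$, the exponent $s:=q/\rho$ satisfies $s\geq 1$, which is precisely the regime in which a triangle inequality is available; this single observation is the structural heart of the matter.

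First I would raise both sides of (\ref{eq:plan1}) to the power $\rho$ and substitute $u_m:=|y_m|^{\rho}\geq 0$, so that $|y_m|^{q}=u_m^{\,s}$ and $\rho/q=1/s$. After this change the inequality takes the form
\begin{equation}\label{eq:plan2}
\left( \int_N \left( \sum_m u_m \right)^{s} d\gamma \right)^{1/s} \leq \sum_m \left( \int_N u_m^{\,s} d\gamma \right)^{1/s},
\end{equation}
that is, $\| \sum_m u_m \|_{L_s} \leq \sum_m \| u_m \|_{L_s}$. This is exactly Minkowski's inequality in $L_s(N,\gamma)$ for the nonnegative functions $u_m$, valid because $s\geq 1$. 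I would then invoke it to obtain (\ref{eq:plan2}), and hence (\ref{eq:plan1}) after undoing the substitution and the exponentiation.

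For a countable family the passage from finite partial sums to the full series is handled by monotone convergence: the partial sums of $\sum_m u_m$ increase pointwise to $\sum_m u_m$, so by (B3) (equivalently, the monotone convergence theorem) their $L_s$-norms increase to $\| \sum_m u_m \|_{L_s}$, while the right-hand side of (\ref{eq:plan2}) is the supremum of its partial sums; taking limits therefore preserves the finite-sum inequality. This same limiting argument shows that finiteness of the series on the right forces finiteness of the left-hand member, which is the part of the definition of $l_\rho$-convexity asserting that convergence on the right implies convergence on the left. In the boundary case $\rho=q$ one has $s=1$ and (\ref{eq:plan2}) degenerates to the equality $\int_N \sum_m u_m\, d\gamma = \sum_m \int_N u_m\, d\gamma$, again by monotone convergence.

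I do not expect a genuine obstacle here: the proof is essentially a change of variables that maps $l_\rho$-convexity of $L_q$ onto Minkowski's inequality. The only points demanding care are the bookkeeping for countably many summands and the verification that the substitution $u_m=|y_m|^{\rho}$ pairs the exponents correctly, giving the admissible Minkowski exponent $s=q/\rho\geq 1$ exactly when $\rho\leq q$; this is where the hypothesis $\rho\in(0,q]$ is used and cannot be relaxed.
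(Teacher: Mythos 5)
Your proof is correct. The paper itself states Lemma \ref{lm1.3.7} without any proof (it is treated as a known fact about Lebesgue spaces), so there is nothing to compare against; your reduction --- substituting $u_m=|y_m|^{\rho}$, observing that $s=q/\rho\geq 1$, and invoking Minkowski's inequality in $L_s(N,\gamma)$ together with monotone convergence for the countable sum --- is exactly the standard argument the authors evidently had in mind, and it correctly isolates where the hypothesis $\rho\leq q$ is used.
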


\begin{proof}[Proof of Corollaries \ref{cl1.3.5} and \ref{cl1.3.6}]  
Denote  $\rho=\min\left\lbrace q, r\right\rbrace.$    Then we have $p\leq\rho \leq r.$   According to Lemma \ref{lm1.3.7} we see that  $Y=L_{q}(N, \gamma)$    is   $l_{\rho}$-convex, and we can apply Theorem \ref{th1.1.1} with $\rho$  instead of  $q.$   Thus,  equality (\ref{eq1.1.13}) holds for $Y=L_{q}(N, \gamma).$    Proof of Corollary \ref{cl1.3.6} is the same. We only apply Theorem \ref{th1.1.3} instead of Theorem \ref{th1.1.1}.

\end{proof}

\subsection{Generalization of monotonicity condition}\label{sbsec1.3}

Analogous results are valid on the cones of functions with the monotonicity property with respect to a given positive function $k \in C(J).$ Define 
\begin{equation}\label{eq1.4.1}
\Omega_{k}\equiv \Omega(X,k)\!=\left\lbrace g\!\in X\!: g \geq 0,\, g(t)/k(t)\downarrow;\, g(t)=g(t-0),\, t \in (0,b)\right\rbrace ,
\end{equation}
\begin{equation}\label{eq1.4.2}
\dot{\Omega}_{k}\equiv \dot{\Omega}(X,k)=\left\lbrace g \in \Omega_{k}:\,  g(t)/k(t)\rightarrow 0, t \rightarrow b-0\right\rbrace
\end{equation}
(in this notation  $\Omega_{1}=\Omega,\,\dot{\Omega}_{1}=\dot{\Omega},$  see (\ref{eq1.1.7})). Denote

 \begin{equation}\label{eq1.4.3}
\begin{aligned}
& \dot{\Omega}_{k,0}\equiv \dot{\Omega}_{0}(X,k)=\left\lbrace k\chi_{(a,t]}: a<t<b\right\rbrace,\\
&{\Omega}_{k,0}\equiv {\Omega}_{0}(X,k)=\dot{\Omega}_{k,0}\cup \left\lbrace k\chi_{(a,b)}\right\rbrace.
\end{aligned}
\end{equation}
 
\begin{theorem}\label{th1.4.1}
Let the conditions of Theorem \ref{th1.1.1} hold with the replacement of  $\Omega$     by  $\Omega_{k}$    and condition (\ref{eq1.1.12}) being replaced by 
 \begin{equation}\label{eq1.4.4}
|| k\chi_{(a,b)}||_{X}=\infty.
\end{equation}
Then
 \begin{equation}\label{eq1.4.5}
||T||_{\dot{\Omega}_{k}}=||T||_{\dot{\Omega}_{k,0}}:=\sup_{a<t<b}\left[ ||F_{k}(\cdot, t)||_{Y} ||k(\cdot)\chi_{(a,t]}(\cdot)||^{-1}_{X}\right],
\end{equation}
where we denote 
 \begin{equation}\label{eq1.4.6}
F_{k}(x, t)=T[k\chi_{(a,t]}](x),\,a<t<b;\, F_{k}(x, b)=T[k\chi_{(a,b)}](x).
\end{equation}
\end{theorem}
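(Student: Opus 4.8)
The plan is to reduce Theorem~\ref{th1.4.1} to Theorem~\ref{th1.1.1} by means of the substitution $g=kh$, which linearizes the monotonicity constraint. Since $k\in C(J)$ is strictly positive, the map $g\mapsto h:=g/k$ is a bijection, and $g(t)/k(t)$ is decreasing precisely when $h$ is decreasing; because $k$ is continuous, $g=kh$ is left-continuous if and only if $h$ is left-continuous, and $g(t)/k(t)\to 0$ as $t\to b-0$ if and only if $h(t)\to 0$. Hence $g\mapsto g/k$ carries $\dot{\Omega}_{k}$ bijectively onto the cone $\dot{\Omega}$ of (\ref{eq1.1.7}) taken relative to a suitably weighted space, and it carries $\dot{\Omega}_{k,0}$ onto $\dot{\Omega}_{0}$.

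First I would introduce the weighted space $\tilde{X}$ consisting of those measurable $h$ with $kh\in X$, equipped with $\|h\|_{\tilde{X}}:=\|kh\|_{X}$, together with the transformed operator $\tilde{T}[h]:=T[kh]$. The identities $F_{k}(\cdot,t)=\tilde{T}[\chi_{(a,t]}]=:\tilde{F}(\cdot,t)$ and $\|k\chi_{(a,t]}\|_{X}=\|\chi_{(a,t]}\|_{\tilde{X}}$ show that the right-hand side of (\ref{eq1.4.5}) is exactly the right-hand side of (\ref{eq1.1.13}) written for $\tilde{T}$ and $\tilde{X}$, while $\|T\|_{\dot{\Omega}_{k}}=\|\tilde{T}\|_{\dot{\Omega}}$ and $\|T\|_{\dot{\Omega}_{k,0}}=\|\tilde{T}\|_{\dot{\Omega}_{0}}$ follow from the substitution in the suprema (\ref{eq1.1.9}) under the constraint $\|g\|_{X}=\|h\|_{\tilde{X}}\leq 1$.

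The bulk of the work is to check that $\tilde{X}$ and $\tilde{T}$ satisfy the hypotheses of Theorem~\ref{th1.1.1}. For $\tilde{X}$ I would verify (B1)--(B4), order-continuity, and $l_{p}$-concavity; each reduces immediately to the corresponding property of $X$ using the equivalences $0\leq h_{1}\leq h_{2}\Leftrightarrow 0\leq kh_{1}\leq kh_{2}$ and $0\leq h_{m}\uparrow h\Leftrightarrow 0\leq kh_{m}\uparrow kh$, together with the pointwise identity $k\,(\sum_{m}|h_{m}|^{p})^{1/p}=(\sum_{m}|kh_{m}|^{p})^{1/p}$. The nondegeneracy (\ref{eq1.1.12}) for $\tilde{X}$ reads $\|\chi_{(a,b)}\|_{\tilde{X}}=\|k\chi_{(a,b)}\|_{X}=\infty$, which is precisely the hypothesis (\ref{eq1.4.4}). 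For $\tilde{T}$, positivity, monotonicity, and homogeneity are immediate from those of $T$ and $k>0$, while the $l_{r}$-convexity (\ref{eq1.1.5}) follows from that of $T$ via the same pointwise identity $k\,(\sum_{m}h_{m}^{r})^{1/r}=(\sum_{m}(kh_{m})^{r})^{1/r}$, noting that $(\sum_{m}h_{m}^{r})^{1/r}$ lies in the cone for $\tilde{T}$ exactly when $(\sum_{m}(kh_{m})^{r})^{1/r}$ lies in $\dot{\Omega}_{k}$.

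The main obstacle I anticipate is purely technical rather than conceptual: confirming that the structural axioms survive multiplication by a weight $k$ that is only assumed continuous and positive, hence possibly unbounded or vanishing near the endpoints $a,b$. The key points are that positivity of $k$ keeps the correspondence $h=g/k$ well-defined and order-preserving, that finiteness of $k$ on $J$ guarantees (B4) (namely, $kh<\infty$ $\beta$-a.e.\ forces $h<\infty$ $\beta$-a.e.), and that continuity of $k$ is exactly what is needed to transfer left-continuity between $g$ and $h$, so that the cones match under the substitution. Once these are in place, Theorem~\ref{th1.1.1} applied to the triple $(\tilde{X},Y,\tilde{T})$ yields (\ref{eq1.1.13}) for $\tilde{T}$, and unwinding the substitution gives (\ref{eq1.4.5}).
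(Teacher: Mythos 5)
Your proposal is correct and follows essentially the same route as the paper: the authors likewise define the weighted space $X_{k}=\{f: fk\in X\}$ with $\|f\|_{X_{k}}=\|kf\|_{X}$ and the operator $T_{k}[f]=T[kf]$, observe that the substitution $g=kf$ identifies $\dot{\Omega}(X,k)$ with $\dot{\Omega}(X_{k},1)$ and turns (\ref{eq1.4.4}) into (\ref{eq1.1.12}), and then apply Theorem~\ref{th1.1.1}. The only difference is that you spell out the verification of (B1)--(B4), order-continuity, $l_{p}$-concavity, and $l_{r}$-convexity for the transformed objects, which the paper merely asserts.
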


\begin{remark}\label{rm1.4.2}
	Note that  
	\begin{equation}\label{eq1.4.7}
	||k\chi_{(a,b)}||_{X}=\infty\Rightarrow {\Omega}_{k}=\dot{\Omega}_{k}\Rightarrow ||T||_{{\Omega}_{k}}=||T||_{\dot{\Omega}_{k}}
	\end{equation}
	because
	$\left\lbrace 0\leq g \downarrow,\, \lim\limits_{t\rightarrow b-0}[g(t)/k(t)]>0\right\rbrace$ leads to $ g \notin X.$
\end{remark}

Now we consider the case 
\begin{equation}\label{eq1.4.8}
||k\chi_{(a,b)}||_{X}<\infty.
\end{equation}
In this case  ${\Omega}_{k, 0}\subset {\Omega}_{k},$    and we have the following statement.

\begin{theorem}\label{th1.4.3}
	Let the conditions of Theorem \ref{th1.1.3} hold with the replacement of    $\Omega$   by   ${\Omega}_{k}$   and condition (\ref{eq1.1.15}) being replaced by (\ref{eq1.4.8}). Then
	
	\begin{equation}\label{eq1.4.9}
	||T||_{\dot{\Omega}_{k}}=||T||_{\dot{\Omega}_{k,0}}:=\sup_{a<t<b}\left[ \left\|F_{k}(\cdot, t)\right\|_{Y} \left\| k(\cdot)\chi_{(a,t]}(\cdot)\right\|^{-1}_{X}\right],
	\end{equation}
		\begin{equation}\label{eq1.4.10}
	||T||_{{\Omega}_{k}}=||T||_{{\Omega}_{k,0}}:=\max\left\lbrace \left\| T\right\|_{\dot{\Omega}_{k,0}}, \left\|F_{k}(\cdot, b)\right\|_{Y}||k(\cdot)\chi_{(a,b)}(\cdot)||^{-1}_{X}\right\rbrace.
	\end{equation}
\end{theorem}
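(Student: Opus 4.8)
The plan is to reduce Theorem~\ref{th1.4.3} to the already established Theorem~\ref{th1.1.3} by a weighted substitution that straightens the $k$-monotone cone into the ordinary decreasing cone. Since $k \in C(J)$ is positive, the map $g \mapsto h := g/k$ is a bijection of $\Omega_{k}$ onto $\Omega$ and of $\dot{\Omega}_{k}$ onto $\dot{\Omega}$: indeed the condition that $g/k$ be nonnegative, decreasing and left-continuous is exactly the defining condition for $h \in \Omega$, while $g = kh$ is left-continuous whenever $h$ is because $k$ is continuous, and $g/k \to 0$ at $b$ matches the defining condition of $\dot{\Omega}$. First I would introduce the weighted space $\tilde{X}$ on $J$ with (quasi)norm $\|h\|_{\tilde{X}} := \|kh\|_{X}$ and the operator $\tilde{T}: \Omega \to Y$ given by $\tilde{T}[h] := T[kh]$.

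The first block of work is to verify that $\tilde{X}$ and $\tilde{T}$ satisfy all hypotheses of Theorem~\ref{th1.1.3}. Because multiplication by the fixed, finite, strictly positive function $k$ is an order isomorphism, each axiom transfers mechanically: axioms (B1)--(B4) and order-continuity for $\tilde{X}$ follow from the corresponding properties of $X$ together with $k>0$; the $l_{p}$-concavity transfers using the identity $\bigl(\sum_{m}(kh_{m})^{p}\bigr)^{1/p} = k\bigl(\sum_{m}h_{m}^{p}\bigr)^{1/p}$, which lets $k$ be factored out of the $X$-norm on both sides of (\ref{eq1.1.2}). The same factorization, applied with exponent $r$ and combined with the $l_{r}$-convexity and positivity of $T$, shows that $\tilde{T}$ is $l_{r}$-convex and positive. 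Crucially, $\|\chi_{(a,b)}\|_{\tilde{X}} = \|k\chi_{(a,b)}\|_{X}$, so the nondegeneracy hypothesis (\ref{eq1.4.8}) for $k$ is precisely condition (\ref{eq1.1.15}) for $\tilde{X}$.

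Next I would match the test families. The elementary functions for $\tilde{T}$ are $\tilde{F}(\cdot,t) = \tilde{T}[\chi_{(a,t]}] = T[k\chi_{(a,t]}] = F_{k}(\cdot,t)$, and likewise $\tilde{F}(\cdot,b) = F_{k}(\cdot,b)$; moreover $\|\chi_{(a,t]}\|_{\tilde{X}} = \|k\chi_{(a,t]}\|_{X}$. Applying Theorem~\ref{th1.1.3} to the triple $(\tilde{X}, Y, \tilde{T})$ then yields (\ref{eq1.1.16}) and (\ref{eq1.1.17}) for $\tilde{T}$. It remains to translate the left-hand sides: from $\|g\|_{X} = \|h\|_{\tilde{X}}$ and the cone bijection one gets $\|\tilde{T}\|_{\dot{\Omega}} = \|T\|_{\dot{\Omega}_{k}}$ and $\|\tilde{T}\|_{\Omega} = \|T\|_{\Omega_{k}}$ directly from the definitions (\ref{eq1.1.8})--(\ref{eq1.1.9}) and their $k$-weighted analogues. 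Substituting the matched quantities turns (\ref{eq1.1.16}) into (\ref{eq1.4.9}) and (\ref{eq1.1.17}) into (\ref{eq1.4.10}).

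I expect the only genuine obstacle to be the careful bookkeeping in this transfer, in particular confirming that the left-continuity and the ``$g/k \to 0$ at $b$'' conditions are preserved exactly under $g \leftrightarrow g/k$ (so that $\dot{\Omega}_{k}$ corresponds to $\dot{\Omega}$ and not to a larger or smaller class), and that the factorization of $k$ through the power-sum expressions holds $\beta$-almost everywhere even on the set where several $h_{m}$ may be infinite. Both points reduce to the continuity and strict positivity of $k$, so no analytic idea beyond Theorem~\ref{th1.1.3} should be required.
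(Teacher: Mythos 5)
Your proposal is correct and coincides with the paper's own argument: the authors likewise define $X_{k}$ with $\|f\|_{X_{k}}=\|kf\|_{X}$ and the operator $T_{k}[f]=T[kf]$, observe that $g\mapsto g/k$ carries $\Omega_{k}$ (resp.\ $\dot{\Omega}_{k}$) onto $\Omega(X_{k},1)$ (resp.\ $\dot{\Omega}(X_{k},1)$) with all hypotheses preserved, and then invoke Theorem~\ref{th1.1.3}. Your additional checks on the transfer of the ideal-space axioms and the $l_{p}$/$l_{r}$ factorizations are sound and only make explicit what the paper leaves implicit.
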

\begin{proof}[Proof of Theorem \ref{th1.4.1}]
	Formally, this theorem is more general than Theorem \ref{th1.1.1}, but we can easily reduce it to that one.
	 
	We consider the $l_{r}$-convex positive operator $T: \dot{\Omega}(X, k)\rightarrow Y.$ Let us define
	\begin{equation}\label{eq1.4.11}
X_{k}:=\left\lbrace f \in S(J, \beta):\, fk \in X\right\rbrace =\left\lbrace f=g/k:\, g \in X\right\rbrace, ||f||_{X_{k}}= ||kf||_{X}.
\end{equation}	
	Then we have the equivalence:
		\begin{equation}\label{eq1.4.12}
	g \in \dot{\Omega}(X, k)\Leftrightarrow f=g/k \in \dot{\Omega}(X_{k}, 1);\, ||f||_{X_{k}}= ||kf||_{X},
	\end{equation}	
	see (\ref{eq1.4.1}), (\ref{eq1.4.2}), (\ref{eq1.4.11}). Therefore,
	\begin{equation*}
	||T||_{\dot{\Omega}(X,k)}\!=\!\sup\left\lbrace \frac{||T[g]||_{Y}}{||g||_{X}}\!: g \!\in\!\dot{\Omega}(X, k)\right\rbrace\!=\!
	\sup\left\lbrace  \frac{||T[kf]||_{Y}}{||f||_{X_{k}}}\!: 	f\!\in\!\dot{\Omega}(X_{k}, 1)\right\rbrace.
	\end{equation*}
	
	Note that $X_{k}$   together with $X$    is an ideal  $l_{p}$-concave space with order-continuous (quasi)norm, and the operator
	$$T_{k}: \dot{\Omega}(X_{k}, 1)\rightarrow Y; \, T_{k}[f]:=T[kf], \, f \in \dot{\Omega}(X_{k}, 1),$$
	is  $l_{r}$-convex together with the operator $T.$ Thus, we apply Theorem \ref{th1.1.1} and obtain that
		\begin{equation*}
	||T||_{\dot{\Omega}(X,k)}=\sup_{a<t<b}\left\lbrace \left\|T[k\chi_{(a,t]}]\right\|_{Y}\left\|k\chi_{(a,t]}\right\|^{-1}_{X}\right\rbrace.
	\end{equation*}
\end{proof}

\begin{proof}[Proof of Theorem \ref{th1.4.3}]
It is essentially the same as one for Theorem  \ref{th1.4.1}. We only replace $\dot{\Omega}(X,k)$  by ${\Omega}(X,k),$ $\dot{\Omega}(X_k, 1)$  by  ${\Omega}(X_k, 1),$ and apply Theorem \ref{th1.1.3} instead of Theorem \ref{th1.1.1}, taking into account that (\ref{eq1.1.15}) has now the form \eqref{eq1.4.8}.
\end{proof}
	
\section{Applications}\label{sec1.5}

\subsection{Calculation of the norm of an integral operator on the cone of functions with the monotonicity property}\label{subsec1.5.1}

Let $K=K(x, \tau)$ be a nonnegative measurable function of variables $(x, \tau) \in N\otimes J,$ where $(N; \gamma)$  and  $(J; \mu)$ are measure spaces with the nonnegative     $\sigma$-finite    $\sigma$-additive measure $\gamma$  and the nonnegative continuous Borel measure    $\mu$ on  $J=(a,b).$
\begin{equation}\label{eq1.5.1}
T_{r\mu}[f](x)=\left( \int_{(a, b)}K(x, \tau)|f(\tau)|^{r}d\mu(\tau)\right)^{1/r},\quad r \in (0, \infty).
\end{equation}
It is  an $l_{r}$-convex monotone operator. For $r=1$ its restriction on the set of nonnegative  $\mu$ - measurable functions coincides with the restriction of the linear integral operator

\begin{equation}\label{eq1.5.2}
T[f](x)=\int_{(a, b)}K(x, \tau)f(\tau)d\mu(\tau).
\end{equation}

All the results of Section 1 are applicable here. In particular, for the restrictions of  $T_{r\mu}$ on the cone    $\Omega_{k}$   application  of Theorems \ref{th1.4.1} and \ref{th1.4.3} gives the following results.

\begin{theorem}\label{th1.5.1}
Let  $0<p\leq q\leq r<\infty;$ $X \subset S(J, \beta)$ be an ideal $l_p$--concave space with order-continuous (quasi) norm;  $Y \subset S(N, \gamma)$   be an $l_q$-convex ideal space, and
\begin{equation}\label{eq1.5.3}
\|k\chi_{(a,b)}\|_{X}=\infty.
\end{equation}
Then,
\begin{equation}\label{eq1.5.4}
\|T_{r\mu}\|_{\Omega_k}=\|T_{r\mu}\|_{\dot{\Omega}_{k,0}}:=\sup_{a<t<b}\left\lbrace  \|T_{r\mu}[k\chi_{(a,t]}]\|_{Y}\|k\chi_{(a,t]}\|_{X}^{-1}\right\rbrace .
\end{equation}
Here
\begin{equation}\label{eq1.5.5}
T_{r\mu}[k\chi_{(a,t]}](x)= \int_{(a, t]}K(x, \tau)k(\tau)^{r}d\mu(\tau),\quad x \in N.
\end{equation}
In the case
\begin{equation}\label{eq1.5.6}
\|k\chi_{(a,b)}\|_{X}<\infty,
\end{equation}
  we have $\|T_{r\mu}\|_{\dot{\Omega}_{k}}=\|T_{r\mu}\|_{\dot{\Omega}_{k,0}}$ (see \eqref{eq1.5.4}),
  
  \begin{equation}\label{eq1.5.7}
  \|T_{r\mu}\|_{\Omega_k}=\max \left\lbrace \|T_{r\mu}\|_{\dot{\Omega}_{k,0}};\quad  \|T_{r\mu}[k\chi_{(a,b)}]\|_{Y}\|k\chi_{(a,b)}\|_{X}^{-1}\right\rbrace .
  \end{equation}
\end{theorem}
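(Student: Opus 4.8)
The plan is to read Theorem \ref{th1.5.1} as a direct specialization of the general cone theorems \ref{th1.4.1} and \ref{th1.4.3} to the concrete operator $T_{r\mu}$. All hypotheses on the parameters $p,q,r$ and on the spaces $X,Y$ are assumed here verbatim from those theorems, so the only structural fact that needs checking is that $T_{r\mu}$ qualifies as an $l_r$-convex positive operator in the sense of (\ref{eq1.1.5})--(\ref{eq1.1.6}). Once that is in place, the norm identities follow by quoting Theorem \ref{th1.4.1} in the nondegenerate case (\ref{eq1.5.3}) and Theorem \ref{th1.4.3} in the degenerate case (\ref{eq1.5.6}), and the only remaining task is to evaluate the abstract generating functions $F_k(\cdot,t)$ of (\ref{eq1.4.6}) explicitly.

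First I would verify positivity and monotonicity. Since $K\ge 0$ and $f\mapsto|f|^r$ is monotone on nonnegative functions, the relation $0\le f\le g$ $\mu$-a.e. forces $|f|^r\le|g|^r$, and integrating against the nonnegative kernel yields $0\le T_{r\mu}[f]\le T_{r\mu}[g]$ $\gamma$-a.e., which is exactly (\ref{eq1.1.6}). For the $l_r$-convexity I would use the factorization $T_{r\mu}[f]=(L[|f|^r])^{1/r}$, where $L[h](x)=\int_{(a,b)}K(x,\tau)h(\tau)\,d\mu(\tau)$ is the linear integral operator with nonnegative kernel. By monotone convergence, $L$ is monotone and countably additive (hence countably sublinear, i.e. $l_1$-convex) on nonnegative functions, so that for $\left(\sum_m f_m^r\right)^{1/r}\in\Omega_k$ one has $T_{r\mu}\left[\left(\sum_m f_m^r\right)^{1/r}\right]^r = L\left[\sum_m f_m^r\right]=\sum_m L[f_m^r]=\sum_m |T_{r\mu}[f_m]|^r$, with positive homogeneity immediate. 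This is precisely the correspondence between $l_r$-convex and countably sublinear operators recorded after (\ref{eq1.1.6}).

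With these two properties established, the hypotheses of Theorem \ref{th1.4.1} (noting that (\ref{eq1.5.3}) is (\ref{eq1.4.4})) and of Theorem \ref{th1.4.3} (noting that (\ref{eq1.5.6}) is (\ref{eq1.4.8})) are met. Applying them gives (\ref{eq1.5.4}) in the nondegenerate case, and in the degenerate case both $\|T_{r\mu}\|_{\dot{\Omega}_k}=\|T_{r\mu}\|_{\dot{\Omega}_{k,0}}$ and the maximum formula (\ref{eq1.5.7}), with generating functions $F_k(\cdot,t)=T_{r\mu}[k\chi_{(a,t]}]$ and $F_k(\cdot,b)=T_{r\mu}[k\chi_{(a,b)}]$. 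Finally I would substitute $f=k\chi_{(a,t]}$ directly into (\ref{eq1.5.1}): because $|k\chi_{(a,t]}|^r=k^r\chi_{(a,t]}$, the integral over $(a,b)$ collapses to the integral over $(a,t]$, producing the closed form (\ref{eq1.5.5}) (carrying the outer exponent $1/r$ inherited from the definition of $T_{r\mu}$), and similarly at the endpoint $t=b$.

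I do not anticipate a serious obstacle: the substantive content, the reduction of the cone norm to the one-parameter family indexed by $t$, is already delivered by Theorems \ref{th1.4.1}--\ref{th1.4.3}. The one point demanding care is the verification of $l_r$-convexity, where the interchange of a possibly infinite sum with the integral defining $L$ must be justified; this is exactly where nonnegativity of $K$ and of the $f_m$ is used, via monotone convergence, so no integrability beyond what the cone membership already supplies is required.
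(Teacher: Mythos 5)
Your proposal is correct and follows essentially the same route as the paper, which likewise obtains Theorem \ref{th1.5.1} by asserting that $T_{r\mu}$ is an $l_r$-convex positive (monotone) operator and then invoking Theorems \ref{th1.4.1} and \ref{th1.4.3} with $F_k(\cdot,t)=T_{r\mu}[k\chi_{(a,t]}]$; your explicit monotone-convergence verification of $l_r$-convexity merely fills in a step the paper takes for granted. Your remark about carrying the outer exponent $1/r$ is in fact more careful than the paper's own display \eqref{eq1.5.5}, which omits that power.
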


\begin{remark}\label{rm1.5.2}
For the restrictions on the cone $\Omega=\Omega_1$ of nonnegative decreasing left-continuous functions we have to set $k(\tau)=1$ in \eqref{eq1.5.3}-- \eqref{eq1.5.7}.
\end{remark}

\begin{remark}\label{rm1.5.3}
In the case  $Y=L_q(N, \gamma)$ the results of Theorem \ref{th1.5.1} hold if  $0<p\leq \min \left\lbrace q,r\right\rbrace <\infty$, see Corollaries \ref{cl1.3.5} and \ref{cl1.3.6}.
\end{remark}

\begin{remark}\label{rm1.5.4}
	As a particular case of operator \eqref{eq1.5.1} we will consider later the case where  $(N, \gamma)=(J, \gamma)$ with the nonnegative continuous  Borel measure $\gamma$ on $J=(a,b)$, and $T_{r\mu}$ coincides with the generalized Hardy-type operator
	\begin{equation}\label{eq1.5.8}
	A_{r\mu}[f](x)=\left( \int_{(a, x]}|f(\tau)|^{r}d\mu(\tau)\right)^{1/r},\quad x \in (a, b).
	\end{equation}
	Then,
	\begin{align}
	A_{r\mu}[k\chi_{(a,t]}](x)=\left( \int_{(a, x]}k(\tau)^{r}d\mu(\tau)\right)^{1/r},\quad x \leq t;\nonumber\\
	A_{r\mu}[k\chi_{(a,t]}](x)=\left( \int_{(a, t]}k(\tau)^{r}d\mu(\tau)\right)^{1/r},\quad x > t.\label{eq1.5.9}	
	\end{align}
	and we have the equalities: in case \eqref{eq1.5.3}

\begin{equation}\label{eq1.5.10}
\|A_{r\mu}\|_{\Omega_k}=\|A_{r\mu}\|_{\dot{\Omega}_{k,0}}:=\sup_{a<t<b}\left\lbrace \left\|A_{r\mu}[k\chi_{(a,t]}]\right\|_{Y}\left\|k\chi_{(a,t]}\right\|_{X}^{-1}\right\rbrace;
\end{equation}
in case \eqref{eq1.5.6} formula \eqref{eq1.5.10} remains true for  $\|A_{r\mu}\|_{\dot{\Omega}_{k}}$, but

 \begin{equation}\label{eq1.5.11}
\|A_{r\mu}\|_{\Omega_k}=\max \left\lbrace \|A_{r\mu}\|_{\dot{\Omega}_{k,0}};\quad  \left\| A_{r\mu}[k\chi_{(a,b)}]\right\|_{Y}\left\|k\chi_{(a,b)}\right\|_{X}^{-1}\right\rbrace .
\end{equation}

\end{remark}
\subsection{Calculation of the associated norm over the cone of functions with the monotonicity property}\label{subsec1.5.2}

Let us describe the results concerning calculation of associate norms mentioned in {\bf Introduction}. For   $0<p\leq \infty$ and for nonnegative Lebesgue-measurable functions $f, g \in \dot{L}^{+}_{0}(\R_{+})$   let us define
\begin{align*}
C_{p}(f, g)=\left(\, \int_{\R_+}fgdt\right) \left(\int_{\R_+}g^{p}dt\right)^{-1/p};\\
A(f, p)=\sup_{g \in  \dot{L}^{+}_{0}} C_{p}(f, g);\quad B(f, p)=\sup_{g \in  \Omega_k} C_{p}(f, g).
\end{align*}
These quantities present the associate norms on the sets $\dot{L}^{+}_{0}(\R_{+})$ and $\Omega_k$, respectively. Here,  $k$ is a given positive continuous function on $[0,\infty)$. Classical results are the following:
 \begin{equation}\label{eq1.5.12}
A(f, p)=\begin{cases}\infty,& 0<p<1, \, f\neq 0;\\ 
\|f\|_{L_{p'}}, & 1\leq p\leq\infty,\frac{1}{p}+\frac{1}{p'}=1.\end{cases} 
\end{equation}
The first assertion illustrates nonexistence of bounded linear non-zero functionals on $L_p(\R_+)$  for $0<p<1$. The second one shows the calculation of the associate norm for the norm in $L_p(\R_+)$, $1\leq p\leq\infty$.

The results on the cone are essentially different. Namely, 
let $0<p\leq 1$. We define
\begin{equation*}
D_{p}(f, k, t)=\left( \int\limits_{(0,t]}fk d\tau\right) \left(\int\limits_{(0,t]}k^{p}d\tau\right)^{-1/p}.
\end{equation*}
\begin{theorem}\label{th1.5.5}
	In  the notation of this Section
	\begin{equation}\label{eq1.5.13}
	B_k(f, p)=\sup \left\lbrace  D_{p}(f, k, t):\, 0<t<\infty\right\rbrace.
	\end{equation}
\end{theorem}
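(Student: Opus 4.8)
The plan is to realize $B_k(f,p)$ as the norm over the cone $\Omega_k$ of a single integral operator and then to invoke Theorem \ref{th1.5.1}. I would fix $X=L_p(\R_+,dt)$ with quasi-norm $\|g\|_X=\left(\int_{\R_+}g^pdt\right)^{1/p}$, take $J=(0,\infty)$, $\mu=dt$, and let $N=\{\ast\}$ be a single point carrying the unit point mass $\gamma$, so that $Y:=L_q(N,\gamma)$ is just $\R$ with $\|y\|_Y=|y|$ for every $q$. With kernel $K(\ast,\tau)=f(\tau)\ge 0$ the operator $T_{1\mu}$ of \eqref{eq1.5.1} (the case $r=1$) becomes $T_{1\mu}[g](\ast)=\int_{\R_+}fg\,dt$, whence
$$\|T_{1\mu}\|_{\Omega_k}=\sup_{g\in\Omega_k}\frac{\int_{\R_+}fg\,dt}{\left(\int_{\R_+}g^pdt\right)^{1/p}}=B_k(f,p),$$
so it suffices to compute $\|T_{1\mu}\|_{\Omega_k}$.

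Next I would verify the hypotheses of Theorem \ref{th1.5.1} with $r=1$. The space $X=L_p$ has order-continuous quasi-norm and is $l_p$-concave (see the remark after \eqref{eq1.1.4}); the one-point space $Y=\R$ equals $L_q$ for \emph{every} $q$, so by Remark \ref{rm1.5.3} the exponent condition reduces to $p\le\min\{q,r\}=\min\{q,1\}$, which for $q\ge 1$ is just $p\le 1$. Finally $T_{1\mu}$ is linear and positive, hence $l_1$-convex, because $l_1$-convexity is countable sublinearity and $T_{1\mu}\left[\sum_m g_m\right]=\sum_m T_{1\mu}[g_m]$ with all terms nonnegative. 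It is exactly here that $0<p\le 1$ is needed: for $p>1$ one could not take $r=1$, and a linear integral operator is not $l_r$-convex for $r>1$, since Minkowski's inequality runs the other way.

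It remains to read off the two quantities entering Theorem \ref{th1.5.1}. By \eqref{eq1.5.5} with $r=1$ we have $T_{1\mu}[k\chi_{(0,t]}](\ast)=\int_{(0,t]}fk\,d\tau$, while $\|k\chi_{(0,t]}\|_X=\left(\int_{(0,t]}k^pd\tau\right)^{1/p}$; their quotient is precisely $D_p(f,k,t)$. Hence $\|T_{1\mu}\|_{\dot{\Omega}_{k,0}}=\sup_{0<t<\infty}D_p(f,k,t)$. In the case $\|k\chi_{(0,\infty)}\|_X=\infty$, formula \eqref{eq1.5.4} (together with Remark \ref{rm1.4.2}, which gives $\Omega_k=\dot{\Omega}_k$) immediately yields $B_k(f,p)=\sup_{0<t<\infty}D_p(f,k,t)$, i.e.\ \eqref{eq1.5.13}.

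The only step needing a moment's care is the case $\|k\chi_{(0,\infty)}\|_X<\infty$, where \eqref{eq1.5.7} appends the boundary term $\|T_{1\mu}[k\chi_{(0,\infty)}]\|_Y\|k\chi_{(0,\infty)}\|_X^{-1}=\left(\int_{\R_+}fk\right)\left(\int_{\R_+}k^p\right)^{-1/p}$, the $t=\infty$ value of $D_p$. Since $\int_{(0,t]}fk$ and $\int_{(0,t]}k^p$ increase to their integrals over $(0,\infty)$ as $t\to\infty$, this boundary term equals $\lim_{t\to\infty}D_p(f,k,t)$, which is at most $\sup_{0<t<\infty}D_p(f,k,t)$. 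Therefore the maximum in \eqref{eq1.5.7} again collapses to $\sup_{0<t<\infty}D_p(f,k,t)$, and \eqref{eq1.5.13} holds in both cases. The main obstacle is thus only the correct bookkeeping of the convexity exponents ($r=1$ forced by linearity, together with $p\le 1$) and the observation that the endpoint contribution is already absorbed into the supremum over $0<t<\infty$.
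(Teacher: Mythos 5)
Your proof is correct and follows essentially the same route as the paper: both reduce \eqref{eq1.5.13} to the general cone theorems by exhibiting $C_p(f,g)$ as $\|Tg\|_Y\|g\|_X^{-1}$ for an $l_1$-convex positive operator with $X=L_p(\R_+,dt)$, evaluate on the generators $k\chi_{(0,t]}$ to get $D_p(f,k,t)$, and absorb the $t=\infty$ boundary term in the case $\int_{\R_+}k^p\,d\tau<\infty$ by a limiting argument. The only (cosmetic) difference is the packaging of the operator: the paper takes $T=I$ mapping into $Y=L_1(\R_+,f\,dt)$, while you take the rank-one integral operator into $L_q$ of a one-point space; the resulting computation is identical.
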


\begin{proof}
	These results follow immediately from Theorems \ref{th1.4.1} and \ref{th1.4.3}. Indeed, here $X=L_p(\R_{+}, \beta)$, $0<p\leq 1$, $\beta$  is the Lebesgue measure on $\R_{+}$; $Y=L_1(\R_{+}, \gamma)$, $d\gamma(t)=g(t)dt$; $T=I$ is the identity operator which is $l_1$-convex, and  Theorems \ref{th1.4.1} and \ref{th1.4.3} are applicable here.
	
	If $\int\limits_{(0, \infty)}k^pd\tau=\infty$, then \eqref{eq1.5.13} follows from Theorem \ref{th1.4.1}. In the case  $\int\limits_{(0, \infty)}k^pd\tau<\infty$  we apply Theorem \ref{th1.4.3}, so that \eqref{eq1.4.10} implies 
	\begin{equation}\label{eq1.5.14}
	B_k(f, p)=\max \left\lbrace \sup \left\lbrace  D_{p}(f, k, t):\, 0<t<\infty\right\rbrace; D_{p}(f, k, \infty)\right\rbrace.
	\end{equation}
	
	In particular, the condition $\int\limits_{(0, \infty)}fk\,d\tau<\infty$ is necessary for the finiteness of $B_k(f, p)$. Nevertheless, let us note that the right--hand sides of \eqref{eq1.5.13} and \eqref{eq1.5.14} coincide because $D_{p}(f, k, t)$ is  continuous on $\R_+$.	
\end{proof}

\begin{remark}
	The results on the cone $\Omega_1$ and $\dot{\Omega}_1$ of decreasing functions we obtain by setting here $k(\tau)\equiv 1$. 
\end{remark}

\subsection{Calculation of the norm of the dilation operator 	and triangle inequality in Lorentz space}\label{subsec1.5.3}

Let $L_0=L_0(\R^n)$  be the space of all Lebesgue-measurable functions $f: \R^n\rightarrow \R$; $\ddot{L}_0=\ddot{L}_0(\R^n)$  be the subspace of functions having the decreasing rearrangements $f^{\ast}$ which is not identical to infinity (see \eqref{eq2.3.7}). 



Note that $f^{\ast}$ is a nonnegative decreasing left-continuous function on  $\R_+$.  Now, let  $0<p<\infty$, $v$  be a positive Lebesgue--measurable function on  $\R_+$.    
Introduce a weighted Lorentz space:  
\begin{equation}\label{eq1.5.17}
\Lambda_{p, v}=\left\lbrace f \in \ddot{L}_0:\, \|f\|_{\Lambda_{p, v}}=\left( \int_{0}^{\infty}(f^{\ast})^{p}v d\tau\right)^{1/p}<\infty\right\rbrace. 
\end{equation}

For  $v(\tau)\equiv 1$, we have $\Lambda_{p, v}=L_p(\R^n)$. The criterion for nontriviality is
\begin{equation}\label{eq1.5.18}
\Lambda_{p, v}\neq{0} \Leftrightarrow \exists t \in \R_+:\, \int\limits_{(0, t)}vd\tau<\infty. 
\end{equation}

Now we consider the dilation operator $\sigma_m,\, m \in \R_+$, 
\begin{equation}\label{eq1.5.19}
\sigma_m(f)(x)=f(m^{-1}x), x \in \R^n. 
\end{equation}
Then,
\begin{equation}\label{eq1.5.20}
\sigma_m(f)^{\ast}=\sigma_{m^n}(f^{\ast}). 
\end{equation}

Indeed, by \eqref{eq2.3.6}
\begin{equation}\label{eq1.5.21}
\begin{aligned}
\lambda_{\sigma_m(f)}(y)&=\mu_n \left\lbrace x \in \R^n:\, |f(m^{-1}x)|>y\right\rbrace \\
&=m^n\mu_n \left\lbrace x \in \R^n:\, |f(x)|>y\right\rbrace=m^n\lambda_{f}(y), 
\end{aligned}
\end{equation}
so that
\begin{equation*}
\begin{aligned}
\left[ \sigma_m(f)\right] ^{\ast}(\tau)&=\inf \left\lbrace y>0:\, \lambda_{\sigma_m(f)}(y)< \tau\right\rbrace \\
&=\inf \left\lbrace y>0:\, \lambda_{f}(y)< m^{-n}\tau\right\rbrace=f^{\ast}(m^{-n}\tau)=\sigma_{m^n}(f^{\ast})(\tau). 
\end{aligned}
\end{equation*}

We define 
\begin{equation}\label{eq1.5.22}
V(\tau)=\int\limits_{(0, \tau]}v(\rho)d\rho,\, \tau \in \R_+,\quad V(\infty)=\int_{\R_+}v(\rho)d\rho=\lim\limits_{\tau \rightarrow \infty}V(\tau).
\end{equation}

\begin{theorem}\label{th1.5.6}
	In the notation of this Section let condition \eqref{eq1.5.18} be satisfied. For $E=\Lambda_{p, v}$   the following equality for the norm of   $\sigma_{m}: E\rightarrow E$   holds: 
	\begin{equation}\label{eq1.5.23}
	\|\sigma_{m}\|=\sup_{\tau \in \R_{ +}} \left[ V(m^n\tau)V(\tau)^{-1}\right]^{1/p};
	\end{equation}
\end{theorem}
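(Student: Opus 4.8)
The plan is to identify $\|\sigma_m\|$ with the norm of a dilation operator restricted to the cone of decreasing functions, and then to invoke Corollaries \ref{cl1.3.5} and \ref{cl1.3.6}. First I pass to rearrangements. By \eqref{eq1.5.20} one has $\sigma_m(f)^{\ast}=\sigma_{m^n}(f^{\ast})$, so that for every $f\in\ddot{L}_0$,
\[
\|\sigma_m(f)\|_{\Lambda_{p,v}}^p=\int_0^{\infty}f^{\ast}(m^{-n}\tau)^p\,v(\tau)\,d\tau,\qquad \|f\|_{\Lambda_{p,v}}^p=\int_0^{\infty}f^{\ast}(\tau)^p\,v(\tau)\,d\tau .
\]
I set $X=Y=L_p(\R_+,\beta)$ with $d\beta(\tau)=v(\tau)\,d\tau$ and introduce the linear dilation $T[g](\tau)=g(m^{-n}\tau)=\sigma_{m^n}(g)(\tau)$. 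Then the ratio $\|\sigma_m(f)\|_{\Lambda_{p,v}}\|f\|_{\Lambda_{p,v}}^{-1}$ equals $\|T[f^{\ast}]\|_Y\|f^{\ast}\|_X^{-1}$. The key reduction is that, as $f$ ranges over the nonzero elements of $\Lambda_{p,v}$, the rearrangement $f^{\ast}$ ranges over all nonzero $g$ in the cone $\Omega$ of \eqref{eq1.1.7} with $\|g\|_X<\infty$: given such $g$, the radial function $f(x)=g(c_n|x|^n)$ with $c_n=\mu_n\{|x|<1\}$ is equimeasurable with $g$ (the level set $\{g(c_n|x|^n)>y\}$ is a ball of measure $\lambda_g(y)$), hence $f^{\ast}=g$. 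Consequently $\|\sigma_m\|=\|T\|_{\Omega}$.

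Next I check that the general theory applies with $q=p$. The space $X=L_p(\beta)$ is $l_p$-concave with order-continuous quasinorm, $Y=L_p(\beta)$ is $l_p$-convex by Lemma \ref{lm1.3.7}, and $T$, being linear, positive and monotone, satisfies $T[(\sum_m g_m^r)^{1/r}]=(\sum_m(Tg_m)^r)^{1/r}$ for every $r$, so it is an $l_r$-convex positive operator for all $r\in(0,\infty)$; thus $0<p\le\min\{q,r\}$ holds. I then only need the images of the generators $\chi_{(0,t]}$: since $\chi_{(0,t]}(m^{-n}\tau)=\chi_{(0,m^nt]}(\tau)$, the function $F(\cdot,t)=T[\chi_{(0,t]}]$ equals $\chi_{(0,m^nt]}$, whence $\|F(\cdot,t)\|_Y=V(m^nt)^{1/p}$, while $\|\chi_{(0,t]}\|_X=V(t)^{1/p}$. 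Therefore the elementary ratio appearing in \eqref{eq1.1.13} and \eqref{eq1.1.16} is exactly $\left[V(m^nt)V(t)^{-1}\right]^{1/p}$.

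It remains to apply the appropriate corollary according to whether $\|\chi_{(a,b)}\|_X=V(\infty)^{1/p}$ is infinite or finite. If $V(\infty)=\infty$, condition \eqref{eq1.1.12} holds, so $\Omega=\dot\Omega$ inside $X$ by Remark \ref{rem1.1.2}, and Corollary \ref{cl1.3.5} gives $\|\sigma_m\|=\|T\|_{\Omega}=\sup_{t>0}\left[V(m^nt)V(t)^{-1}\right]^{1/p}$, which is \eqref{eq1.5.23}. If $V(\infty)<\infty$, condition \eqref{eq1.1.15} holds and Corollary \ref{cl1.3.6} yields
\[
\|\sigma_m\|=\max\left\{\sup_{t>0}\left[V(m^nt)V(t)^{-1}\right]^{1/p},\ \|F(\cdot,\infty)\|_Y\|\chi_{(a,b)}\|_X^{-1}\right\}.
\]
Here $T[\chi_{(0,\infty)}]\equiv1$, so the second entry equals $V(\infty)^{1/p}V(\infty)^{-1/p}=1$; but since $V(m^nt)V(t)^{-1}\to1$ as $t\to\infty$ with $V$ continuous, the supremum is already $\ge1$, so the maximum collapses to the supremum and \eqref{eq1.5.23} follows once more.

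The step deserving most care is the reduction $\|\sigma_m\|=\|T\|_{\Omega}$: one must confirm that every decreasing left-continuous $g$ with $\|g\|_X<\infty$ is genuinely realized as a rearrangement $f^{\ast}$, so that the operator norm over $\Lambda_{p,v}$ is the full cone norm and not merely dominated by it; and, in the finite-measure case, that the boundary term produced by Corollary \ref{cl1.3.6} cannot exceed the supremum. Both are settled above, by the equimeasurable radial construction and by the limit $V(m^nt)/V(t)\to1$; the remaining items (order-continuity and $l_p$-concavity of $L_p$, and well-definedness of the ratios under the nontriviality hypothesis \eqref{eq1.5.18}) are routine.
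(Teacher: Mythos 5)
Your proof is correct and follows essentially the same route as the paper: reduce via $\sigma_m(f)^{\ast}=\sigma_{m^n}(f^{\ast})$ to a supremum over the cone of decreasing functions and apply Corollaries \ref{cl1.3.5} and \ref{cl1.3.6}, with the continuity of $V$ absorbing the boundary term when $V(\infty)<\infty$. The only (harmless) difference is that you keep the operator as the dilation $T[g](\tau)=g(m^{-n}\tau)$ on $L_p(v\,d\tau)$ rather than changing variables to the identity operator with the shifted weight $v(m^n\rho)$ as the paper does, and you make explicit the realization of every left-continuous decreasing $g$ as some $f^{\ast}$, which the paper leaves implicit in \eqref{eq1.5.24}--\eqref{eq1.5.25}.
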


\begin{proof}
	We have 
		\begin{align*}
		&\|\sigma_{m}(f)\|_{\Lambda_{p, v}}=\left( \int_{\R_{+}}\left[\sigma_{m}(f)^{\ast}\right]^{p}vd\tau\right)^{1/p}= \left( \int_{\R_{+}}\left[\sigma_{m^n}(f^{\ast})\right]^{p}vd\tau\right)^{1/p}\\ 
		&= \left( \int_{\R_{+}}\left[f^{\ast}(m^{-n}\tau)\right]^{p}v(\tau)d\tau\right)^{1/p}=m^{n}\left( \int_{\R_{+}}\left[f^{\ast}(\rho)\right]^{p}v(m^{n}\rho)d\rho\right)^{1/p},	
		\end{align*}
		Therefore,
		\begin{align*}
		\|\sigma_{m}\|=m^{n}\sup_{f \in \Lambda_{p, v}}\left[\left(\int_{\R_{+}}\left[f^{\ast}(\rho)\right]^{p}v(m^{n}\rho)d\rho\right)^{1/p} \left( \int_{\R_{+}}\left[f^{\ast}(\rho)\right]^{p}v(\rho)d\rho\right)^{-1/p}\right]. 
		\end{align*}
	Note that in this context 

	\begin{equation*}
	\Omega\equiv \left\lbrace g:\, 0\leq g \downarrow,\, g(t)=g(t-0);\, \int_{\R_{+}}g^pvd\rho<\infty\right\rbrace,
	\end{equation*}
	\begin{equation*}
	\dot{\Omega}\equiv \left\lbrace g \in \Omega:\, g(t)\rightarrow 0\,(t\rightarrow \infty)\right\rbrace.
	\end{equation*}	
	
	\begin{equation}\label{eq1.5.24}
	V(\infty)=\infty,\, f \in \Lambda_{p, v}\Leftrightarrow g=f^{\ast} \in \dot{\Omega};
	\end{equation}
		\begin{equation}\label{eq1.5.25}
	V(\infty)<\infty,\, f \in \Lambda_{p, v}\Leftrightarrow g=f^{\ast} \in \Omega.
	\end{equation}
	Therefore,  $V(\infty)=\infty$ or $V(\infty)<\infty$  imply (respectively)
		\begin{align*}
\|\sigma_{m}\|=m^{n}\sup_{g \in \dot{\Omega}}\left[\left(\int_{\R_{+}}g(\rho)^{p}v(m^{n}\rho)d\rho\right)^{1/p} \left( \int_{\R_{+}}g(\rho)^{p}v(\rho)d\rho\right)^{-1/p}\right]; 
	\end{align*}
	\begin{align*}
	\|\sigma_{m}\|=m^{n}\sup_{g \in {\Omega}}\left[\left(\int_{\R_{+}}g(\rho)^{p}v(m^{n}\rho)d\rho\right)^{1/p} \left( \int_{\R_{+}}g(\rho)^{p}v(\rho)d\rho\right)^{-1/p}\right]; 
	\end{align*}
	
	In both cases we come to formula  \eqref{eq1.5.23}   applying Corollaries \ref{cl1.3.5} and \ref{cl1.3.6}, respectively. In the last case we take into account that
\begin{equation*}
\sup_{0<\tau \leq \infty}	\left[ V(m^n\tau)V(\tau)^{-1}\right]^{1/p}=\sup_{\tau \in \R_{+}}\left[ V(m^n\tau)V(\tau)^{-1}\right]^{1/p},
\end{equation*}
because the function $V$  is positive and continuous on $\R_{+}$.	
\end{proof}

\begin{remark}\label{rm1.5.7}
	Note that
	\begin{equation*}
\|\sigma_{m}\|=\sup_{\tau \in \R_{+}}\left[ V(m^n\tau)V(\tau)^{-1}\right]^{1/p}<\infty \Leftrightarrow V \in \Delta_2.
	\end{equation*}    
	It is the well-known  $\Delta_2$- condition for the function $V$. 
\end{remark}
Let us discuss now the triangle inequality for a Lorentz space $\Lambda_{p,v}$.

\begin{theorem}\label{th1.5.8}
In the notation of this Section, let  $\|\sigma_{2^{1/n}}\|<\infty$. Then\\ $f, g$ $\in \Lambda_{p,v}$ implies $f+g \in \Lambda_{p,v}$, and 
	\begin{equation}\label{eq1.5.26}
\|f+g\|_{\Lambda_{p,v}}\leq \|\sigma_{2^{1/n}}\|\left(  \|f\|_{\Lambda_{p,v}}+\|g\|_{\Lambda_{p,v}}\right),\, 1\leq p<\infty;
\end{equation} 
\begin{align}\label{eq1.5.27}
&\|f+g\|_{\Lambda_{p,v}}\leq \|\sigma_{2^{1/n}}\|\left(  \|f\|_{\Lambda_{p,v}}^{p}+\|g\|_{\Lambda_{p,v}}^{p}\right)^{1/p}\nonumber \\
&\leq 2^{1/p -1}\|\sigma_{2^{1/n}}\|\left(  \|f\|_{\Lambda_{p,v}}+\|g\|_{\Lambda_{p,v}}\right),\, 0< p<1.
\end{align}
\end{theorem}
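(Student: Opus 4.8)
The plan is to combine the classical subadditivity of the decreasing rearrangement with the dilation identity \eqref{eq1.5.20} established above, and then to pass to the $\Lambda_{p,v}$-(quasi)norm separately in the two ranges of $p$. First I would recall the standard rearrangement inequality $(f+g)^*(s+t)\le f^*(s)+g^*(t)$, valid for all $s,t\in\R_+$ (see \cite[Ch.~2]{BS}); choosing $s=t=\tau/2$ gives the pointwise bound $(f+g)^*(\tau)\le f^*(\tau/2)+g^*(\tau/2)$ for $\tau\in\R_+$. In particular $(f+g)^*$ is finite a.e., so $f+g\in\ddot{L}_0$ and the left-hand sides of \eqref{eq1.5.26} and \eqref{eq1.5.27} are well defined.

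The decisive step is to read $f^*(\tau/2)$ as an action of a dilation on $f$ itself. By the definition $\sigma_m(h)(x)=h(m^{-1}x)$ we have $f^*(\tau/2)=\sigma_2(f^*)(\tau)$, and the identity \eqref{eq1.5.20} taken with $m=2^{1/n}$ (so that $m^n=2$) yields $\sigma_2(f^*)=(\sigma_{2^{1/n}}f)^*$. Hence the pointwise estimate becomes $(f+g)^*\le(\sigma_{2^{1/n}}f)^*+(\sigma_{2^{1/n}}g)^*$ on $\R_+$, an inequality between two nonnegative decreasing functions.

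It remains to take the weighted $L_p$-(quasi)norm against $v\,d\tau$ of both sides, using monotonicity of the integral. For $1\le p<\infty$ I would apply Minkowski's inequality in $L_p(\R_+,v\,d\tau)$ to split the right-hand side, obtaining $\|f+g\|_{\Lambda_{p,v}}\le\|\sigma_{2^{1/n}}f\|_{\Lambda_{p,v}}+\|\sigma_{2^{1/n}}g\|_{\Lambda_{p,v}}$; for $0<p<1$ I would instead use $(x+y)^p\le x^p+y^p$ under the integral sign to get $\|f+g\|_{\Lambda_{p,v}}^p\le\|\sigma_{2^{1/n}}f\|_{\Lambda_{p,v}}^p+\|\sigma_{2^{1/n}}g\|_{\Lambda_{p,v}}^p$. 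In either case the operator-norm bound $\|\sigma_{2^{1/n}}h\|_{\Lambda_{p,v}}\le\|\sigma_{2^{1/n}}\|\,\|h\|_{\Lambda_{p,v}}$, finite by hypothesis, applied to $h=f$ and $h=g$ produces \eqref{eq1.5.26} and the first inequality of \eqref{eq1.5.27}. The second inequality in \eqref{eq1.5.27} is the elementary convexity estimate $(a^p+b^p)^{1/p}\le 2^{1/p-1}(a+b)$ already recorded in \eqref{eq1.1.4}, which follows from convexity of $t\mapsto t^{1/p}$ for $0<p<1$.

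I expect the only genuine point requiring attention to be the passage from $f^*(\cdot/2)$ to $(\sigma_{2^{1/n}}f)^*$: the whole argument hinges on matching the halving of the variable forced by the rearrangement inequality with the exponent $n$ appearing in \eqref{eq1.5.20}, which is precisely why the relevant dilation factor is $2^{1/n}$ rather than $2$. Everything else---the subadditivity of the rearrangement and the two cases of the integral estimate---is routine, and no finiteness issue arises, since $\|\sigma_{2^{1/n}}\|<\infty$ guarantees $\sigma_{2^{1/n}}f,\sigma_{2^{1/n}}g\in\Lambda_{p,v}$.
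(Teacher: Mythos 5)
Your proposal is correct and follows essentially the same route as the paper's own proof: the subadditivity $(f+g)^*(\tau_1+\tau_2)\le f^*(\tau_1)+g^*(\tau_2)$ with $\tau_1=\tau_2=\tau/2$, the identification $\sigma_2(f^*)=(\sigma_{2^{1/n}}f)^*$ via \eqref{eq1.5.20}, and then the (power-)triangle inequality in $L_{p,v}$ followed by the operator-norm bound for $\sigma_{2^{1/n}}$. No gaps.
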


\begin{proof}
	For $f, g \in \ddot{L}_0$  there is the well-known  inequality (see \cite{BS})
	\begin{equation*}
	(f+g)^{\ast}(\tau_1+\tau_2)\leq f^{\ast}(\tau_1)+g^{\ast}(\tau_2),\, 0<\tau_1, \tau_2 <\infty.
	\end{equation*}
	Thus, 
		\begin{equation*}
	(f+g)^{\ast}(\tau)\leq f^{\ast}(\tau/2)+g^{\ast}(\tau/2)=\sigma_{2}(f^{\ast})(\tau)+\sigma_{2}(g^{\ast})(\tau),\, 0<\tau <\infty,
	\end{equation*}
	so that in view of \eqref{eq1.5.20}
	\begin{equation*}
	(f+g)^{\ast}\leq [\sigma_{2^{1/n}}(f)]^{\ast}+[\sigma_{2^{1/n}}(g)]^{\ast}.
	\end{equation*}
		\begin{equation*}
	\|f+g\|_{\Lambda_{p,v}}\!=\!\left( \int\limits_{0}^{\infty}\![(f+g)^{\ast}]^{p}vd\tau\right)^{1/p}\!\leq\!\left( \int\limits_{0}^{\infty}\! [(\sigma_{2^{1/n}}(f))^{\ast}+(\sigma_{2^{1/n}}(g))^{\ast}]^{p}vd\tau\right)^{1/p}\!.
	\end{equation*}
	Now, we apply the triangle inequality for Lebesgue spaces in the form
		\begin{equation}\label{eq1.5.28}
	\|h_1+h_2\|_{L_{p,v}}\leq \|h_1\|_{L_{p,v}}+\|h_2\|_{L_{p,v}},\, 1\leq p<\infty;
	 \end{equation}
	 \begin{equation}\label{eq1.5.29}
	 \begin{aligned}
	 &\|h_1+h_2\|_{L_{p,v}}\leq \left( \|h_1\|_{L_{p,v}}^{p}+ \|h_2\|_{L_{p,v}}^{p}\right)^{1/p}\\
	&\leq 2^{1/p-1}\left( \|h_1\|_{L_{p,v}}+ \|h_2\|_{L_{p,v}}\right),\, 0<p<1.
	\end{aligned}
	\end{equation}
	Here,  $h_1=(\sigma_{2^{1/n}}(f))^{\ast}$, $h_2=(\sigma_{2^{1/n}}(g))^{\ast}$. Therefore, for $1\leq p<\infty$,
	\begin{equation*}\begin{aligned}
	\|f+g\|_{\Lambda_{p,v}}&\leq \|(\sigma_{2^{1/n}}(f))^{\ast}\|_{L_{p,v}}+ \|(\sigma_{2^{1/n}}(g))^{\ast}\|_{L_{p,v}}\\
	&\leq\|(\sigma_{2^{1/n}}\|\left(\|f\|_{\Lambda_{p,v}}+\|g\|_{\Lambda_{p,v}}\right).
		\end{aligned} 
		\end{equation*}
		Analogously, we obtain \eqref{eq1.5.27}  applying \eqref{eq1.5.29}.
\end{proof}

\begin{remark}\label{rm1.5.9}
Let us recall the equivalence 
\begin{equation}\label{eq1.5.30}
\|\sigma_{2^{1/n}}\|=\sup_{\tau \in \R_{+}} \left[V(2\tau)V(\tau)^{-1} \right]^{1/p}<\infty \Leftrightarrow V \in \Delta_2.
\end{equation}
Thus, this condition guaranties that $\Lambda_{p,v}$   is  a linear (quasi)normed space. It is known that condition \eqref{eq1.5.30} is necessary for such linearity, see \cite{CPSS,CRS}.
\end{remark}

\subsection{Calculation of the norms of embedding operators for Lorentz spaces}

We preserve the notation of Section 2 and consider the conditions for the embedding of Lorentz spaces.
\begin{equation*}
\Lambda_{p,v}\subset  \Lambda_{q,w},\, 0<p\leq q.
\end{equation*}
Here $v,w$ are positive Lebesgue-measurable functions on  $(0,\infty)$. 

\begin{theorem}\label{th1.5.10}
	For $0<p\leq q < \infty$  the following equivalence takes place:
\begin{equation}\label{eq1.5.31}
\Lambda_{p,v}\subset  \Lambda_{q,w} \Leftrightarrow \, A_{pq}:=\sup_{t \in \R_+} \left[ W(t)^{1/q}V(t)^{-1/p}\right] <\infty,
\end{equation}	 
	where $W(t)=\int\limits_{(0, t]}wd\tau$.  Moreover, $\|J\|=A_{pq}$   for the embedding operator $J:\Lambda_{p,v}\rightarrow  \Lambda_{q,w}$.  
\end{theorem}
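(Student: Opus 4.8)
The plan is to recognize \eqref{eq1.5.31} as a direct instance of the general machinery of Section \ref{sec1}, exactly as flagged in the Introduction through \eqref{eq2.3.12}--\eqref{eq2.3.13}. First I would reduce the embedding to a statement about the identity operator on the cone of decreasing functions. Since $\|f\|_{\Lambda_{p,v}}=\|f^{\ast}\|_{L_p(\R_+,\,v\,d\tau)}$ and $\|f\|_{\Lambda_{q,w}}=\|f^{\ast}\|_{L_q(\R_+,\,w\,d\tau)}$ depend on $f$ only through $f^{\ast}$, and since every nonnegative decreasing left-continuous function finite a.e. is realized as $f^{\ast}$ for a suitable $f\in\ddot{L}_0(\R^n)$, the embedding operator $J$ is bounded iff $\Lambda_{p,v}\subset\Lambda_{q,w}$, and
\[
\|J\|=\sup_{g\in\Omega,\,g\neq 0}\Big[\Big(\int_{\R_+}g^q w\,d\tau\Big)^{1/q}\Big(\int_{\R_+}g^p v\,d\tau\Big)^{-1/p}\Big]=\|I\|_{\Omega},
\]
where $\Omega$ is the cone \eqref{eq1.1.7} with $J=(a,b)=(0,\infty)$, $X=L_p(\R_+,\,v\,d\tau)$, $Y=L_q(\R_+,\,w\,d\tau)$, and $T=I$. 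The nontriviality hypothesis \eqref{eq1.5.18} guarantees $\Omega\neq\{0\}$, so this supremum is meaningful.

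Next I would check the hypotheses of Corollaries \ref{cl1.3.5} and \ref{cl1.3.6}. The space $X=L_p(\R_+,\,v\,d\tau)$ is an ideal $l_p$-concave space with order-continuous quasinorm for $0<p<\infty$; the target $Y=L_q(\R_+,\,w\,d\tau)$ is of the required form $L_q(N,\gamma)$; and the identity operator is trivially $l_r$-convex for every $r$ (with equality in \eqref{eq1.1.5}), so in particular for some $r\geq q$. Thus $0<p\leq\min\{q,r\}<\infty$ and the corollaries apply, splitting into the two cases $\|\chi_{(0,\infty)}\|_X=V(\infty)^{1/p}=\infty$ (Corollary \ref{cl1.3.5}, via Theorem \ref{th1.1.1}) and $V(\infty)<\infty$ (Corollary \ref{cl1.3.6}, via Theorem \ref{th1.1.3}). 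Here I use $k\equiv 1$, so the relevant test functions are $\chi_{(0,t]}$ and $F(\cdot,t)=I[\chi_{(0,t]}]=\chi_{(0,t]}$.

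Then it remains to evaluate the resulting supremum on the test functions. Directly,
\[
\|\chi_{(0,t]}\|_X=\Big(\int_{(0,t]}v\,d\tau\Big)^{1/p}=V(t)^{1/p},\qquad \|F(\cdot,t)\|_Y=\Big(\int_{(0,t]}w\,d\tau\Big)^{1/q}=W(t)^{1/q},
\]
so formula \eqref{eq1.1.13} (resp. \eqref{eq1.1.16}) gives $\sup_{0<t<\infty}W(t)^{1/q}V(t)^{-1/p}=A_{pq}$. In the case $V(\infty)<\infty$ the extra endpoint term $W(\infty)^{1/q}V(\infty)^{-1/p}$ arising from \eqref{eq1.1.17} does not enlarge the supremum: $V$ and $W$ are nonnegative, nondecreasing and continuous on $\R_+$ (by \eqref{eq1.5.22} and the analogous definition of $W$), so $t\mapsto W(t)^{1/q}V(t)^{-1/p}$ is continuous and its value at $\infty$ is the limit already captured by the supremum over finite $t$ --- the same continuity device used in the proof of Theorem \ref{th1.5.5}. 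Hence $\|J\|=A_{pq}$ in both cases, which proves the equivalence and the norm identity.

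The step I expect to require the most care is the reduction in the first paragraph: one must confirm that the supremum defining $\|J\|$ over all $f\in\Lambda_{p,v}$ really coincides with $\|I\|_{\Omega}$, i.e. that the rearrangement map sends $\Lambda_{p,v}$ onto the cone $\Omega\subset X$, and that condition \eqref{eq1.5.18} is precisely what excludes the trivial cone. After that reduction the conclusion is a mechanical application of Corollaries \ref{cl1.3.5}--\ref{cl1.3.6}, the only remaining subtlety being the continuity argument that absorbs the $V(\infty)<\infty$ endpoint into the supremum.
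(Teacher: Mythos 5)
Your proposal is correct and follows essentially the same route as the paper: the paper likewise passes to the cones $\dot{\Omega}$ or $\Omega$ via the equivalences \eqref{eq1.5.24}--\eqref{eq1.5.25}, applies Corollaries \ref{cl1.3.5} and \ref{cl1.3.6} to the identity operator (which is $l_r$-convex for every $r$), and absorbs the $V(\infty)<\infty$ endpoint into the supremum by the same continuity observation. The only difference is that you spell out the evaluation on the test functions $\chi_{(0,t]}$ and the surjectivity of the rearrangement map onto the cone, which the paper leaves implicit.
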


\begin{proof}
	We have 
	\begin{equation}\label{eq1.5.32}
	\|J\|=\sup_{f \in \Lambda_{p,v}} \left[ \left( \int_{\R_+}(f^{\ast})^{q}wd\tau\right) ^{1/q}\left( \int_{\R_+}(f^{\ast})^{p}vd\tau\right)^{-1/p}\right].
	\end{equation}
	Now, the equivalences \eqref{eq1.5.24} or \eqref{eq1.5.25} admit us to come to the cones $\dot{\Omega}$  or $\Omega$. Note that the identity  operator is $l_r$-convex for any $r \in \R_+$, and for $0<p\leq q <\infty$ we can apply Corollaries \ref{cl1.3.5}  and \ref{cl1.3.6}, respectively. In both cases we have
	\begin{equation*}
	\|J\|=\sup_{t \in \R_+} \left[ W(t)^{1/q}V(t)^{-1/p}\right].
	\end{equation*}
	As before we take into account that 
	\begin{equation*}
\sup_{t \in (0, \infty]} \left[ W(t)^{1/q}V(t)^{-1/p}\right]=\sup_{t \in \R_+} \left[ W(t)^{1/q}V(t)^{-1/p}\right].
	\end{equation*}
	Now let us consider the other variant of Lorentz spaces. For the decreasing rearrangement $f^{\ast}$ we introduce the mean-value $f_{r,\mu}^{\ast\ast}$ with respect to the nonnegative continuous Borel measure $\mu$ on $\R_+$:
	\begin{equation*}
	f_{r,\mu}^{\ast\ast}(\tau)=\left( \frac{1}{M(\tau)}\int\limits_{(0, \tau]}(f^{\ast})^rd\mu\right)^{1/r};\quad M(\tau)=\int\limits_{(0, \tau]}d\mu,\, \tau \in \R_+.
	\end{equation*}
	For $0<q,r<\infty$  the generalized $\Gamma$- Lorentz space is determined as follows
	\begin{equation*}
	\Gamma_{qr}(\gamma,\mu)=\left\lbrace f \in \ddot{L}_{0}:\, \|f\|_{\Gamma_{qr}(\gamma,\mu)}=\left(\int_{0}^{\infty} \left( f_{r,\mu}^{\ast\ast}\right)^{q}d\gamma\right)^{1/q}<\infty\right\rbrace,
	\end{equation*}
	where $\gamma$ is a nonnegative continuous Borel measure.
	Classical variant of this space (see \cite{CPSS, CRS}) we obtain in the case: $r=1,\,\mu$ is  a Lebesgue measure, $d\gamma(\tau)=w(\tau)d\tau$.  Some variants of generalization were introduced in  \cite{FiR}-\cite{GPS}. Here we consider the case $0<p\leq \min\left\lbrace q,r\right\rbrace$. 
\end{proof}
 \begin{theorem}\label{th1.5.11}
 	Let  $0<p\leq \min\left\lbrace q,r\right\rbrace$. Then the following equivalences take place.
 	\begin{enumerate}
 		\item If $V(\infty)=\infty$, then
 		\begin{equation}\label{eq1.5.33}
 	\Lambda_{p,v}\subset \Gamma_{qr}(\gamma,\mu)\Leftrightarrow \dot{J}_{pqr}:= \sup_{t \in \R_{ +}}\left[ W_{qr}(t)^{1/q}V(t)^{-1/p}\right] <\infty,
 		\end{equation}
 		and $\|J\|=\dot{J}_{pqr}$  for the embedding operator $J: \Lambda_{p,v}\rightarrow \Gamma_{qr}(\gamma,\mu)$, where 
 		\begin{equation}\label{eq1.5.34}
 		 W_{qr}(t)=\int_{\R_+}\left( \frac{M(\min\left\lbrace t, \tau\right\rbrace )}{M(\tau)}\right) ^{q/r}d\gamma(\tau).
 		\end{equation}
 		
 		\item If $V(\infty)<\infty$, then
 			\begin{equation}\label{eq1.5.35}
 		\Lambda_{p,v}\subset \Gamma_{qr}(\gamma,\mu)\Leftrightarrow J_{pqr}:= \sup_{t \in (0, \infty]}\left[ W_{qr}(t)^{1/q}V(t)^{-1/p}\right] <\infty,
 		\end{equation}
 		and $\|J\|=J_{pqr}$. 
 	\end{enumerate}
 \end{theorem}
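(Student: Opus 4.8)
The plan is to recognize that the $\Gamma_{qr}$-norm is exactly the $L_q(\R_+,\gamma)$-norm of an $l_r$-convex integral operator of type (\ref{eq1.5.1}) acting on the decreasing rearrangement, and then to quote Theorem \ref{th1.5.1} together with Remark \ref{rm1.5.3}, following the same pattern as the proof of Theorem \ref{th1.5.10}. Concretely, I would set $X=L_p(\R_+,\beta)$ with $d\beta=v\,d\tau$ and $Y=L_q(\R_+,\gamma)$, and write $\|J\|=\sup\{\|f\|_{\Gamma_{qr}(\gamma,\mu)}/\|f\|_{\Lambda_{p,v}}:f\in\Lambda_{p,v}\}$.

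First I would pass from functions to the cone. Putting $g=f^{\ast}$ and using the equivalences (\ref{eq1.5.24}), (\ref{eq1.5.25}), the rearrangement $g$ runs over $\dot{\Omega}$ when $V(\infty)=\infty$ and over $\Omega$ when $V(\infty)<\infty$, with $\|f\|_{\Lambda_{p,v}}=\|g\|_{X}$ in both cases. The central observation is that $f^{\ast\ast}_{r,\mu}=T_{r\mu}[g]$ for the operator
\[
T_{r\mu}[g](x)=\left(\int_{(0,\infty)}K(x,\tau)\,g(\tau)^{r}\,d\mu(\tau)\right)^{1/r},\qquad K(x,\tau)=M(x)^{-1}\chi_{(0,x]}(\tau),
\]
so that $\|f\|_{\Gamma_{qr}(\gamma,\mu)}=\|T_{r\mu}[g]\|_{Y}$ and hence $\|J\|=\|T_{r\mu}\|_{\dot{\Omega}}$ (respectively $\|T_{r\mu}\|_{\Omega}$). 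Since $K$ is nonnegative and measurable, $T_{r\mu}$ is an $l_r$-convex positive operator; since $X=L_p$ is $l_p$-concave with order-continuous quasinorm and $0<p\le\min\{q,r\}$, I may apply Theorem \ref{th1.5.1} with $k\equiv1$ in the form valid for $Y=L_q$ (Remark \ref{rm1.5.3}, which rests on Lemma \ref{lm1.3.7}).

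It then remains to evaluate the test-function quantities. A direct computation gives
\[
F(x,t)=T_{r\mu}[\chi_{(0,t]}](x)=\left(M(x)^{-1}\int_{(0,t]}\chi_{(0,x]}\,d\mu\right)^{1/r}=\left(\frac{M(\min\{t,x\})}{M(x)}\right)^{1/r},
\]
whence $\|F(\cdot,t)\|_{Y}^{q}=W_{qr}(t)$ by (\ref{eq1.5.34}), while $\|\chi_{(0,t]}\|_{X}=V(t)^{1/p}$. Substituting into the norm formula of Theorem \ref{th1.5.1} yields $\|J\|=\sup_{0<t<\infty}[W_{qr}(t)^{1/q}V(t)^{-1/p}]=\dot J_{pqr}$ when $V(\infty)=\infty$, which is (\ref{eq1.5.33}). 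When $V(\infty)<\infty$ the additional boundary term of (\ref{eq1.5.7}) uses $F(x,\infty)=1$, so that $\|F(\cdot,\infty)\|_{Y}=W_{qr}(\infty)^{1/q}$ and $\|\chi_{(0,\infty)}\|_{X}=V(\infty)^{1/p}$; the maximum then collapses to the supremum over $(0,\infty]$, giving $\|J\|=J_{pqr}$, which is (\ref{eq1.5.35}). In each case the stated equivalence is read off from the computed value of $\|J\|$, since $\|J\|<\infty$ is precisely the boundedness of the embedding. I expect the only nonroutine point to be the identification of the kernel $K(x,\tau)=M(x)^{-1}\chi_{(0,x]}(\tau)$ and the verification that the resulting $F(\cdot,t)$ reproduces exactly $W_{qr}(t)$ of (\ref{eq1.5.34}); once the operator is cast into the form (\ref{eq1.5.1}), the remainder is a transcription of Theorem \ref{th1.5.1} plus the endpoint bookkeeping of the two cases, exactly parallel to Theorem \ref{th1.5.10}.
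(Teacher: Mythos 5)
Your proposal is correct and follows essentially the same route as the paper: rewrite the $\Gamma_{qr}$-norm as the $Y$-norm of an $l_r$-convex positive integral operator applied to $f^{\ast}$, pass to $\dot{\Omega}$ or $\Omega$ via \eqref{eq1.5.24}--\eqref{eq1.5.25}, and reduce to characteristic functions via the general theorems under $0<p\leq\min\{q,r\}$. The only (immaterial) difference is bookkeeping: you absorb the factor $M(x)^{-1}$ into the kernel $K(x,\tau)=M(x)^{-1}\chi_{(0,x]}(\tau)$ and keep $Y=L_q(\R_+,\gamma)$, whereas the paper keeps the plain Hardy-type operator $A_{r\mu}$ and transfers $M(\tau)^{-q/r}$ into a modified target measure $\tilde{\gamma}$; both yield the same $W_{qr}(t)$ and the same endpoint analysis.
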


\begin{proof}
	\begin{enumerate}
		\item We define  
			\begin{equation}\label{eq1.5.36}
	X=L_{p,v}(\R_+),\, Y=L_{q}(\R_+, \tilde{\gamma});\, d \tilde{\gamma}(\tau)=M(\tau)^{-q/r}d\gamma(\tau),\, \tau \in \R_+. 
		\end{equation}
		We have by definitions
			\begin{equation*}
		\|f\|_{\Gamma_{qr}(\gamma,\mu)}=\left( \int_{\R_+}\left(f_{r\mu}^{**}\right) ^qd\gamma\right)^{1/q}=
		\left( \int_{\R_+}\left(\int\limits_{(0, \tau]}(f^{*})^{r}d\mu\right)^{q/r}d\tilde{\gamma}(\tau)\right)^{1/q},
		\end{equation*}
		\begin{equation*}
		\|J\|=\sup_{f \in \Lambda_{p,v}}\left[ \left( \int_{\R_+}\left(\int\limits_{(0, \tau]}(f^{*})^{r}d\mu\right)^{q/r}d\tilde{\gamma}(\tau)\right)^{1/q} \left( \int_{\R_+}(f^{*})^{p}vd\tau\right)^{-1/p}\right].
		\end{equation*}
		Now we take into account equivalences \eqref{eq1.5.24} and \eqref{eq1.5.25} and come to the cones $\dot{\Omega}$  or $\Omega$.
		
		\item    If $V(\infty)=\infty$  we have $\Omega=\dot{\Omega}$  and 
		\begin{equation*}
		\|J\|=\sup_{g \in \dot{\Omega}}\left[ \left( \int_{\R_+}\left(\int\limits_{(0, \tau]}g^{r}d\mu\right)^{q/r}d\tilde{\gamma}(\tau)\right)^{1/q} \left( \int_{\R_+}g^{p}vd\tau\right)^{-1/p}\right].
		\end{equation*}
		
		In the case  $g(\tau)=\chi_{(0,t]}(\tau)$   we have 
			\begin{equation*} 
		\int\limits_{(0, \tau]}g^{r}d\mu=M(\min\left\lbrace t,\tau\right\rbrace );\, \int_{\R_+}g^{p}vd\tau=V(t).	
			\end{equation*}     
			Thus, condition \eqref{eq1.1.13} is satisfied, and application of Corollary \ref{cl1.3.5} implies   
			\begin{equation*}
			\|J\|=\sup_{t \in \R_+}\left[ \left( \int_{\R_+}M(\min\left\lbrace t,\tau\right\rbrace )^{q/r}d\tilde{\gamma}(\tau)\right)^{1/q}V(t)^{-1/p}\right].
			\end{equation*}     
			Therefore, equality \eqref{eq1.5.33} holds.
			
			If $V(\infty)<\infty$,  then 
			\begin{equation*}
			\|J\|=\sup_{g \in \Omega}\left[ \left( \int_{\R_+}\left(\int\limits_{(0, \tau]}g^{r}d\mu\right)^{q/r}d\tilde{\gamma}(\tau)\right)^{1/q} \left( \int_{\R_+}g^{p}vd\tau\right)^{-1/p}\right].
			\end{equation*}
			In this case condition \eqref{eq1.1.15} is satisfied, and application of Corollary \ref{cl1.3.6} implies
			\begin{equation*}
			\|J\|=\sup_{t \in (0, \infty]}\left[ \left( \int_{\R_+}M(\min\left\lbrace t,\tau\right\rbrace )^{q/r}d\tilde{\gamma}(\tau)\right)^{1/q}V(t)^{-1/p}\right].
			\end{equation*}     
			Therefore, equality \eqref{eq1.5.35} holds.
	\end{enumerate}
\end{proof}
\begin{remark}\label{rm1.5.12}
	We obtain \eqref{eq1.5.33}  and \eqref{eq1.5.35} applying results for general operators (Corollaries \ref{cl1.3.5} and \ref{cl1.3.6}). Now, let us note that $M(\min\left\lbrace t,\tau\right\rbrace )$ increases by $t$  for any $\tau \in \R_+$,  $\lim\limits_{t \rightarrow +\infty}M(\min\left\lbrace t,\tau\right\rbrace)=M(\tau)$.    Therefore, by Levy's theorem 
	\begin{equation}\label{eq1.5.37}
	W_{qr}(t)\uparrow W_{qr}(\infty)=\int_{\R_{+}}d\gamma.
	\end{equation}
	Therefore, in the case  $0<V(\infty)<\infty$    we have
		\begin{equation}\label{eq1.5.38}
\dot{J}_{pqr}\geq \lim_{t \rightarrow +\infty}\left[ 	W_{qr}(t)^{1/q}V(t)^{-1/p}\right] =\left[  W_{qr}(\infty)V(\infty)^{-1/p}\right].
	\end{equation}
	It means that in the considered case $J_{pqr}=\dot{J}_{pqr}$.
\end{remark}
\begin{corollary}\label{cl1.5.13}
	In Theorem \ref{th1.5.11} let $p=q\leq r,\, d\gamma(\tau)=v(\tau)d\tau.$ Then the equality $\Lambda_{p,v}=\Gamma_{pr}(\gamma,\mu)$ is equivalent to the following  condition:  
	\begin{equation*}
	\|J\|=\sup_{t \in \R_+}\left[ \left( \int_{\R_+}M(\min\left\lbrace t,\tau\right\rbrace)^{p/r}M(\tau)^{-p/r}v(\tau)d\tau
	\right)^{1/p}V(t)^{-1/p}\right]<\infty.
	\end{equation*} 
	Moreover, 
		\begin{equation}\label{eq1.5.39}
\|f\|_{\Lambda_{p,v}}\leq \|f\|_{\Gamma_{pr}(\gamma,\mu)}\leq 
	\|J\|\|f\|_{\Lambda_{p,v}}.
	\end{equation}
\end{corollary}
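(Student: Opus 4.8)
The plan is to read the statement off Theorem \ref{th1.5.11} after one elementary pointwise comparison that supplies the reverse inclusion at no cost. First I would record that $f^{\ast}(\tau)\le f_{r,\mu}^{\ast\ast}(\tau)$ for every $\tau\in\R_+$: since $f^{\ast}$ is nonincreasing we have $(f^{\ast}(s))^{r}\ge(f^{\ast}(\tau))^{r}$ for all $s\in(0,\tau]$, and integrating against $d\mu$ over $(0,\tau]$, dividing by $M(\tau)$, and extracting the $r$-th root gives exactly this bound. Raising to the power $p$ and integrating against $d\gamma=v\,d\tau$ then yields
\begin{equation*}
\|f\|_{\Lambda_{p,v}}\le\|f\|_{\Gamma_{pr}(\gamma,\mu)},
\end{equation*}
which is the left inequality of \eqref{eq1.5.39} and establishes the inclusion $\Gamma_{pr}(\gamma,\mu)\subseteq\Lambda_{p,v}$ unconditionally, i.e.\ with no restriction on the weights.

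Next I would specialize Theorem \ref{th1.5.11} to $q=p$ and $d\gamma(\tau)=v(\tau)\,d\tau$. The hypothesis $0<p\le\min\{q,r\}$ then collapses to $p\le r$, which is precisely the assumption of the corollary, so the theorem applies. With this choice the weight $W_{qr}(t)$ of \eqref{eq1.5.34} becomes $\int_{\R_+}M(\min\{t,\tau\})^{p/r}M(\tau)^{-p/r}v(\tau)\,d\tau$, and the embedding constant furnished by the theorem is exactly the $\|J\|$ displayed in the statement. Thus Theorem \ref{th1.5.11} delivers the reverse inclusion $\Lambda_{p,v}\subset\Gamma_{pr}(\gamma,\mu)$ together with $\|f\|_{\Gamma_{pr}(\gamma,\mu)}\le\|J\|\,\|f\|_{\Lambda_{p,v}}$, the right inequality of \eqref{eq1.5.39}, and it identifies this inclusion with the finiteness $\|J\|<\infty$.

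Combining the two steps finishes the argument: if $\|J\|<\infty$, the reverse inclusion together with the unconditional inclusion $\Gamma_{pr}(\gamma,\mu)\subseteq\Lambda_{p,v}$ yields the set equality $\Lambda_{p,v}=\Gamma_{pr}(\gamma,\mu)$ and the full chain \eqref{eq1.5.39}, while conversely the set equality entails $\Lambda_{p,v}\subset\Gamma_{pr}(\gamma,\mu)$ and hence $\|J\|<\infty$ by Theorem \ref{th1.5.11}. The only point that is not purely formal, and the place I expect to need care, is the range of the supremum defining $\|J\|$: Theorem \ref{th1.5.11} produces a supremum over $\R_+$ when $V(\infty)=\infty$ but over $(0,\infty]$ when $V(\infty)<\infty$. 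Here Remark \ref{rm1.5.12} reconciles the two descriptions, since it shows that the terminal value $t=\infty$ never raises the supremum (indeed $\dot{J}_{pqr}\ge[W_{qr}(\infty)V(\infty)^{-1/p}]$ when $0<V(\infty)<\infty$), so in both cases $\|J\|=\sup_{t\in\R_+}[\cdots]$, exactly as written in the statement.
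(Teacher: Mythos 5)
Your proof is correct and follows essentially the same route as the paper: the right-hand inequality of \eqref{eq1.5.39} is read off from Theorem \ref{th1.5.11} specialized to $q=p$, $d\gamma=v\,d\tau$, and the left-hand inequality comes from the pointwise bound $f^{*}\le f_{r,\mu}^{**}$, which the paper states and you justify via the monotonicity of $f^{*}$. The only added content is your explicit check, via Remark \ref{rm1.5.12}, that the supremum over $(0,\infty]$ and over $\R_+$ agree when $V(\infty)<\infty$, which the paper leaves implicit.
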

 Indeed, the right-hand-side inequality follows by Theorem \ref{th1.5.11}. For the left-hand-side one it is enough to note that 
 	\begin{equation*}
 f_{r\mu}^{**}(\tau)=\left(\frac{1}{M(\tau)} \int\limits_{(0, \tau]}(f^{*})^{r}d\mu \right)^{1/r}\geq f^{*}(\tau).
 \end{equation*}
 \begin{remark}\label{rm1.5.14}
 	In this Section we considered  the applications to the theory of Lorentz spaces of the results obtained in  Section 1 for operators on the cones with monotonicity properties. They are connected with some restrictions on parameters such as $0<p\leq q\leq r$ in Theorems \ref{th1.1.1}, \ref{th1.1.3}, \ref{th1.4.1}, \ref{th1.4.3}, or $0<p\leq q$ in Theorem \ref{th1.5.10}, or $0<p\leq \min\left\lbrace q,r\right\rbrace $  in Corollaries \ref{cl1.3.5}, \ref{cl1.3.6}, and  Theorem \ref{th1.5.11}. The technic of consideration connected with such restrictions admit us to calculate \textit{exactly} the norms of related operators. In the next Section we formulate a result  about order-sharp estimates for Hardy-type operators on the general cone $ \Omega_{k}$ without such restrictions on parameters. 
 \end{remark}

\section{ESTIMATES FOR HARDY-TYPE OPERATORS}

Let $b \in (0, \infty], \ p, q, r \in (0, \infty)$ be fixed, $\beta, \gamma, \mu$ be nonnegative continuous Borel measures on  $(0, b); \beta \in N_{p}(k)$ that is 
$$0<\omega_{kp}(t):=\left(\int\limits_{(0,t)}k^qd\beta \right) \in C(0,b);\quad \omega_{kp}(+0)=0,\, \omega_{kp}(b-0)=\infty.$$
 Consider 
	\begin{equation}
\label{eq3.1}
H_{ \Omega_{k}} \equiv H_{ \Omega_{k}}   \left( {A_{r\mu}}  \right)=
\sup\limits_{f\in \Omega_{k }}  \left[\left( \int\limits_{(0, b)} \left( A_{r\mu}f\right)^{q} d \gamma \right)^{ 1 / q}\left(\int\limits_{(0, b)}  {f^{ p} d \beta }  \right)^{- 1 / p}\right].
	\end{equation}
	Here,
	
	\begin{equation}
	\label{eq3.2}
	\left(  {A_{r \mu}   f}  \right) \left(  { t}
	\right)=\left( \int\limits_{( { 0, t}]} { f^{r} d \mu
	}\right)^{{1}/{r}}, \, t \in (0, b),
	\end{equation}
	is the Hardy-type operator. 
	We need some notation to formulate the results. Define 
	\begin{equation}
	\label{eq3.3}
	\Psi_{r}(\tau)\equiv 	\Psi_{r}(k, \mu; \tau)=\left( \int\limits_{( {0, \tau } ]}
	k^{r} d \mu\right)^{{1}/{r}} ,  \quad \tau \in (0, b),
	\end{equation}
		\begin{equation}
	\label{eq3.4}
\mathfrak{J}_{p q r}\equiv 	\mathfrak{J}_{p q r}(k, \beta, \gamma, \mu)=\sup_{t \in (0, b)}\left[\left( \int\limits_{\left( {0, b}  \right)}
	\Psi_{r}(\min \left\lbrace t, \tau \right\rbrace )^{q}d\gamma(\tau)\right)^{{1}/{q}}\omega_{k p}(t)^{-1}\right].  
	\end{equation}
	\begin{equation}
	\label{eq3.5}
	V_{ pr}  \left(  { t}  \right)\equiv 	V_{ pr}  \left(  {k, \beta, \mu; t}  \right)=\mathop {\sup} \limits_{\tau \in
	({0, t }  ]} \left[  \Psi_{r} \left(\tau \right) \omega _{k p} \left(  { \tau }\right)^{-1} \right],\quad p \leq r;
	\end{equation}
	and for $p>r, \sigma=\frac{pr}{p-r},$ 
	\begin{equation}
	\label{eq3.6}
	V_{ pr}  \left(  { t}  \right)=\left\lbrace  {\int\limits_{({0, t }]} {\Psi_{r}  \left({\tau }\right)}^{ \sigma} \left(- d \left[ {\omega _{ k p} \left({ \tau }  \right)}^{ -\sigma}\right]\right)  } +\left[\Psi_{r}(t) \omega _{ k p}(t)^{-1}\right]^{\sigma}  \right\rbrace^{ \frac{1}{\sigma}};
	\end{equation}
	\begin{equation}
	\label{eq3.7}
	W_{ q}  \left(  { t}  \right)=	W_{ q}  \left(  {\gamma; t}  \right)=\left(\ {\int\limits_{\left(
			{ t, b }  \right)} {d \gamma} }  \right)^{ 1  / q}, \, t \in (0, b);
	\end{equation}
	
	\begin{equation}
	\label{eq3.8}
	E_{ p q r} \equiv E_{ p q}\left( k, r, \beta, \gamma, \mu\right) =\mathop {\sup} \limits_{t \in (0, b)}  \left[\left(\int\limits_{ (0, t]} {\Psi_{r}^{ q} d \gamma}	\right)^{ 1 / q}\omega _{k p}  \left( t\right)^{-1}\right],\quad p \leq  q;
	\end{equation}
	and for $p > q, s=\frac{pq}{p-q},$
	\begin{equation}
	\label{eq3.9}
	E_{ p q r}  = \left\lbrace \int\limits_{(0, b) }  \left(\int\limits_{ (0,t]} \Psi_{r}^{ q}d \gamma   \right)^{{ s}\mathord{\left/ {\vphantom {{ s}  { q}}} \right.
					\kern-\nulldelimiterspace} { q}}\left(  {- d \left[
				\omega _{k p}(t)^{-s}   \right]}
			\right)  \right\rbrace ^{ { 1}  \mathord{\left/ {\vphantom {{ 1}  { s}}}
			\right. \kern-\nulldelimiterspace} { s}};
	\end{equation}
	\begin{equation}
	\label{eq3.10}
	F_{ p q r}\equiv 	F_{ p q} (k, r, \beta, \gamma, \mu) =\mathop {\sup} \limits_{t\in 
	(0, b)}  \left[
	{V_{ pr}  \left(  { t}  \right)W_{ q} \left(  { t}  \right)}
	\right],\quad p \leq  q,
	\end{equation}
	\begin{equation}
	\label{eq3.11}
	F_{ p q r} =\left\lbrace {\int\limits_{(0, b) }}  V_{ p r}(t)^{ s}\left( -d \left[W_{ q}(t)^{ s}\right]  \right) \right\rbrace^{ 1 / s},\quad p > q, s=\frac{pq}{p-q}. 
	\end{equation}
	
	\begin{theorem}\label{th3.1} Let the above notation and conditions hold, $\beta \in N_p(k)$. Then the following assertions take place: 
		\begin{equation}
		\label{eq3.12}
		 H_{ \Omega _{ k}}=\mathfrak{J}_{p q r},\, p\leq \min \left\lbrace q; r\right\rbrace;
		\end{equation}
			\begin{equation}
			\label{eq3.13}
			c ^{ - 1}\left(  { (E_{ p q r })^{r} +(F_{ p q r})^{r} }  \right)^{{1}/{r}}\leq H_{ \Omega _{ k}}(A_{r \mu})  \leq c\left( (E_{ p q r})^{r} +(F_{p q r})^{r}\right)^{{1}/{r}}, 
			\end{equation}
		for $p> \min \left\lbrace q; r\right\rbrace$	with $c	=c\left(p, q, r \right)\in \left[ { 1, \infty} \right).$ 
		\end{theorem}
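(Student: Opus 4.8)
The plan is to handle the two ranges separately. In the \emph{easy} range $p\le\min\{q,r\}$ I would invoke Section~1 directly. By hypothesis $\beta\in N_p(k)$, so $\|k\chi_{(0,b)}\|_X=\omega_{kp}(b-0)=\infty$ and we are in the nondegenerate case \eqref{eq1.5.3}; since $Y=L_q(\gamma)$ is $l_\rho$-convex for $\rho=\min\{q,r\}\ge p$ by Lemma~\ref{lm1.3.7}, Theorem~\ref{th1.5.1} (via Corollary~\ref{cl1.3.5}) yields $H_{\Omega_k}=\sup_{0<t<b}\|A_{r\mu}[k\chi_{(0,t]}]\|_Y\,\|k\chi_{(0,t]}\|_X^{-1}$. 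The computation \eqref{eq1.5.9} together with \eqref{eq3.3} gives $A_{r\mu}[k\chi_{(0,t]}](x)=\Psi_r(\min\{t,x\})$ and $\|k\chi_{(0,t]}\|_X=\omega_{kp}(t)$, so the supremum is precisely $\mathfrak{J}_{pqr}$ of \eqref{eq3.4}. This establishes \eqref{eq3.12}.

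In the \emph{hard} range $p>\min\{q,r\}$ the single-test-function reduction fails, so I would first \emph{linearise}. Putting $h=f^r$ (so that $f\in\Omega_k$ becomes $h\in\Omega_{k^r}$) turns $A_{r\mu}f=(L[h])^{1/r}$ with the linear Hardy operator $L[h](t)=\int_{(0,t]}h\,d\mu$, and gives $H_{\Omega_k}^{\,r}=\sup_{h}\big(\int(L[h])^{\tilde q}d\gamma\big)^{1/\tilde q}\big(\int h^{\tilde p}d\beta\big)^{-1/\tilde p}$ with $\tilde p=p/r$, $\tilde q=q/r$. Writing $h=k^r g$ with $g$ nonnegative decreasing and absorbing $k^r$ into the measures, this is a weighted \emph{linear} Hardy inequality on the cone of decreasing functions in the genuinely hard regime $\tilde p>\min\{1,\tilde q\}$, equivalent to $p>\min\{q,r\}$.

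The core is to solve this decreasing-cone Hardy inequality. I would discretise with respect to the fundamental function: choose $t_j$ with $\omega_{kp}(t_j)=2^j$ (possible since $\omega_{kp}$ is continuous with $\omega_{kp}(+0)=0$, $\omega_{kp}(b-0)=\infty$), approximate each decreasing $g$ by a function constant on the blocks $(t_j,t_{j+1}]$, and reduce to a discrete Hardy inequality on sequences. This decouples into two independent conditions. The \emph{direct} Hardy condition, after returning to the original variables, is $E_{pqr}$: a supremum when $p\le q$ and the integral with $s=pq/(p-q)$ when $p>q$, cf.\ \eqref{eq3.8}--\eqref{eq3.9}. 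The \emph{dual}, Copson-type, condition is $F_{pqr}$, built from $V_{pr}$, whose two shapes reflect the second threshold $p\lessgtr r$ (i.e.\ $\tilde p\lessgtr 1$), with $\sigma=pr/(p-r)$, cf.\ \eqref{eq3.5}--\eqref{eq3.6} and \eqref{eq3.10}--\eqref{eq3.11}. Undoing $h=f^r$ converts the $\max$ (or sum) of the two conditions into the $r$-th power combination $\big((E_{pqr})^r+(F_{pqr})^r\big)^{1/r}$ of \eqref{eq3.13}.

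The main obstacle is the sharp \emph{lower} bound in \eqref{eq3.13}: each of $E_{pqr}$ and $F_{pqr}$ must be shown $\le c\,H_{\Omega_k}$, which requires constructing near-extremal decreasing step functions adapted to the doubling partition so as to excite the two mechanisms \emph{separately}, then recombining them without losing more than a factor $c(p,q,r)$ --- the delicate point being that both conditions have to be read off the single quantity $H_{\Omega_k}$. A further technical burden, relative to the classical Lebesgue case, is that $\mu,\beta,\gamma$ are arbitrary continuous Borel measures, so the discretisation and the integration-by-parts behind the integral forms \eqref{eq3.9} and \eqref{eq3.11} must be carried out for Stieltjes measures rather than densities.
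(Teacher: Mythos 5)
The paper itself gives no proof of Theorem~\ref{th3.1}: the introduction states explicitly that Section~3 presents these order-sharp estimates without proofs, so there is nothing to compare your argument against line by line. Judged on its own terms, your treatment of \eqref{eq3.12} is correct and complete, and it is exactly the intended route: $\beta\in N_p(k)$ gives $\|k\chi_{(0,b)}\|_X=\omega_{kp}(b-0)=\infty$, so the nondegenerate case \eqref{eq1.5.3} applies; Lemma~\ref{lm1.3.7} makes $Y=L_q(\gamma)$ $l_\rho$-convex with $\rho=\min\{q,r\}\ge p$, Corollary~\ref{cl1.3.5} (equivalently Theorem~\ref{th1.5.1} with Remark~\ref{rm1.5.3}) reduces the supremum to the test functions $k\chi_{(0,t]}$, and \eqref{eq1.5.9} identifies $\|A_{r\mu}[k\chi_{(0,t]}]\|_Y\|k\chi_{(0,t]}\|_X^{-1}$ with the expression under the supremum in \eqref{eq3.4}. (You silently read $\omega_{kp}(t)$ as $(\int_{(0,t)}k^p\,d\beta)^{1/p}$, which is surely what the paper intends despite the misprint $k^q$ and the missing exponent in its definition.)

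For \eqref{eq3.13}, however, what you have written is a programme, not a proof. The linearisation $h=f^r$, the passage to a weighted Hardy inequality on the decreasing cone with indices $\tilde p=p/r$, $\tilde q=q/r$, and the dyadic discretisation along level sets of $\omega_{kp}$ are all plausible and standard, and they do explain where the two quantities $E_{pqr}$ and $F_{pqr}$ and the two thresholds $p\lessgtr q$ (giving $s=pq/(p-q)$) and $p\lessgtr r$ (giving $\sigma=pr/(p-r)$) come from. But the two steps that actually constitute the theorem are left as declared ``obstacles'': (i) the upper bound, i.e.\ the proof that the discrete Hardy and Copson conditions with the stated Stieltjes-integral forms \eqref{eq3.9}, \eqref{eq3.11} are \emph{sufficient} in the full range $p>\min\{q,r\}$ of positive exponents (where duality is unavailable when $\tilde p<1$ or $\tilde q<1$ and one must argue via blockwise estimates on decreasing step functions); and (ii) the lower bound, which requires exhibiting admissible functions of $\Omega_k$ that saturate $E_{pqr}$ and $F_{pqr}$ separately and then summing the resulting local estimates to recover the $\ell^s$-type expressions. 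Neither is routine, and without them the two-sided estimate \eqref{eq3.13} is not established. As it stands, your argument proves the first assertion of the theorem and only outlines the second.
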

	\begin{remark}\label{rm3.1} Note that the convergence of integral in (\ref{eq3.11}) and integration by part imply the following equality
		\begin{equation}\label{eq3.11'}
		F_{ p q r} =\left\lbrace {\int\limits_{(0, b) }}  W_{ q}(t)^{ s}\left( d \left[V_{ pr}(t)^{ s}\right]  \right) \right\rbrace^{ 1 / s}. 
	\end{equation}
\end{remark}
	\begin{remark} Obviously, (\ref{eq3.13}) implies estimate
		\begin{equation}
	\label{eq3.13'}
	c_{1}^{ - 1}\left(  { E_{ p q r } +F_{ p q r} }  \right)\leq H_{ \Omega _{ k}} \leq c_{1}\left( E_{ p q r} +F_{p q r}\right),
	\end{equation}
	with some constant $c_{1}	=c_{1}\left(p, q, r \right)\in \left[ { 1, \infty} \right).$ 
\end{remark}

\section*{Acknowledgments}
This work is supported by the Ministry of Science and Higher Education of the Russian Federation: agreement no. 075-03-2020-223/3 (FSSF-2020-0018).



\begin{thebibliography}{24}
\expandafter\ifx\csname natexlab\endcsname\relax\def\natexlab#1{#1}\fi
\expandafter\ifx\csname bibnamefont\endcsname\relax
  \def\bibnamefont#1{#1}\fi
\expandafter\ifx\csname bibfnamefont\endcsname\relax
  \def\bibfnamefont#1{#1}\fi
\expandafter\ifx\csname citenamefont\endcsname\relax
  \def\citenamefont#1{#1}\fi
\expandafter\ifx\csname url\endcsname\relax
  \def\url#1{\texttt{#1}}\fi
\expandafter\ifx\csname urlprefix\endcsname\relax\def\urlprefix{URL }\fi
\providecommand{\bibinfo}[2]{#2}
\providecommand{\eprint}[2][]{\url{#2}}


 \bibitem[1]{BaG}
  \bibinfo{author}{\bibfnamefont{E.~G.}~\bibnamefont{Bakhtigareeva}}, \bibnamefont{and}
  \bibinfo{author}{\bibfnamefont{M.~L.}~\bibnamefont{Goldman}},
    \bibinfo{title}{"Construction of an optimal envelope for a cone of nonnegative functions with monotonicity properties"},
  \bibinfo{journal}{Proc. Steklov Inst. Math.}
  \textbf{\bibinfo{volume}{293}},
 \bibinfo{pages}{37--55}
   (\bibinfo{year}{2016}).

\bibitem[2]{BL}
\bibinfo{author}{\bibfnamefont{J.} \bibnamefont{Berg}}, \bibnamefont{and}
  \bibinfo{author}{\bibfnamefont{J.} \bibnamefont{L\"{o}fstr\"{o}m}},  
  \emph{\bibinfo{booktitle}{Interpolation spaces. An Introduction}}, \bibinfo{publisher}{Springer-    Verlag},
  \bibinfo{address}{Berlin, Heidelberg, New York}, (\bibinfo{year}{1976}).
  
  
 \bibitem[3]{BS}  \bibinfo{author}{\bibfnamefont{C.}~\bibnamefont{Bennett}},
 \bibnamefont{and}
 \bibinfo{author}{\bibfnamefont{R.}~\bibnamefont{Sharpley}},
 \bibinfo{title}{Interpolation of operators},
 \bibinfo{booktitle}{Pure Appl. Math.,}
 \textbf{\bibinfo{volume}{129}}, 
 \bibinfo{publisher}{Acad. Press},
  \bibinfo{address}{Boston}, 
  (\bibinfo{year}{1988}).
  
  \bibitem[4]{BG}
  \bibinfo{author}{\bibfnamefont{V.~I.}~\bibnamefont{Burenkov}}, \bibnamefont{and}
  \bibinfo{author}{\bibfnamefont{M.~L.}~\bibnamefont{Goldman}},
    \bibinfo{title}{"Calculation of the norm of a positive operator on the cone of monotone functions"},
  \bibinfo{journal}{Proc. Steklov Inst. Math.}
  \textbf{\bibinfo{volume}{210}},
 \bibinfo{pages}{47--65}
   (\bibinfo{year}{1995}).

 \bibitem[5]{CPSS}
\bibinfo{author}{\bibfnamefont{M.}~\bibnamefont{Carro}},
\bibinfo{author}{\bibfnamefont{L.}~\bibnamefont{Pick}},
\bibinfo{author}{\bibfnamefont{J.}~\bibnamefont{Soria}}, \bibnamefont{and}
\bibinfo{author}{\bibfnamefont{D.~V.}~\bibnamefont{Stepanov}},
\bibinfo{title}{"On embeddings between classical Lorentz spaces"},
\bibinfo{journal}{Math. Inequal. Appl.}
\textbf{\bibinfo{volume}{4}} 
(\bibinfo{number}{3}),
 \bibinfo{pages}{397--428}
 (\bibinfo{year}{2001}).

\bibitem[6]{CRS}
\bibinfo{author}{\bibfnamefont{M.}~\bibnamefont{Carro}} \bibnamefont{and}
\bibinfo{author}{\bibfnamefont{J.}~\bibnamefont{Soria}},
\bibnamefont{and}
\bibinfo{author}{\bibfnamefont{J.}~\bibnamefont{Raposo}}
\bibinfo{title}{"Recent developments in the theory of Lorentz spaces and weighted inequalities"},
\bibinfo{journal}{Mem. Amer. Math. Soc.}
\textbf{\bibinfo{volume}{187}}
(\bibinfo{number}{877}),
 (\bibinfo{year}{2007}).
 
 \bibitem[7]{FL}
 \bibinfo{author}{\bibfnamefont{W.}~\bibnamefont{Farkas}}, \bibnamefont{and}
 \bibinfo{author}{\bibfnamefont{H. - G.}~\bibnamefont{Leopold}},
 \bibinfo{title}{"Characterizations of function spaces of generalized smoothness"},
\bibinfo{journal}{Ann. Mat. Pura Appl., IV Ser.}
 \textbf{\bibinfo{volume}{185}} 
 (\bibinfo{number}{1}),
  \bibinfo{pages}{1--62}
  (\bibinfo{year}{2006}).
  
  \bibitem[8]{FiR}
   \bibinfo{author}{\bibfnamefont{A.}~\bibnamefont{Fiorenza}}, \bibnamefont{and}
   \bibinfo{author}{\bibfnamefont{J.~ M.}~\bibnamefont{Rakotoson}},
   \bibinfo{title}{"Some estimates in $G\varGamma(p, m, w)$ spaces"},
  \bibinfo{journal}{J. Math. Anal. Appl.}
   \textbf{\bibinfo{volume}{340}}, 
     \bibinfo{pages}{793--805}
    (\bibinfo{year}{2008}).
    
      \bibitem[9]{FiRZ}
       \bibinfo{author}{\bibfnamefont{A.}~\bibnamefont{Fiorenza}}, 
       \bibinfo{author}{\bibfnamefont{J.~ M.}~\bibnamefont{Rakotoson}},
       \bibnamefont{and}
       \bibinfo{author}{\bibfnamefont{L.}~\bibnamefont{Zitouni}},
       \bibinfo{title}{"Relative rearrangement methods for estimating dual norms"},
      \bibinfo{journal}{Indiana Univ. Math. J. }
       \textbf{\bibinfo{volume}{58}}, 
         \bibinfo{pages}{1127--1149}
        (\bibinfo{year}{2009}).
        
        
 \bibitem[10]{GAG}
         \bibinfo{author}{\bibfnamefont{A.}~\bibnamefont{Gogatishvili}}, 
         \bibinfo{author}{\bibfnamefont{Canay}~\bibnamefont{Aykol}},
         \bibnamefont{and}
         \bibinfo{author}{\bibfnamefont{V.~S.}~\bibnamefont{Guliyev}},
         \bibinfo{title}{"Characterization of associate space on generalized weighted weak-Lorentz spaces and embeddings"},
        \bibinfo{journal}{Studia Math.}
         \textbf{\bibinfo{volume}{228}} 
          (\bibinfo{number}{3}),
           \bibinfo{pages}{223--233}
          (\bibinfo{year}{2015}).       
        
   \bibitem[11]{GPS}
        \bibinfo{author}{\bibfnamefont{A.}~\bibnamefont{Gogatishvili}}, 
        \bibinfo{author}{\bibfnamefont{L.}~\bibnamefont{Pick}},
        \bibnamefont{and}
        \bibinfo{author}{\bibfnamefont{F.}~\bibnamefont{Soudsky}},
        \bibinfo{title}{"Characterization of associate spaces on weighted Lorentz spaces with applications"},
       \bibinfo{journal}{Studia Math.}
        \textbf{\bibinfo{volume}{224}} 
         (\bibinfo{number}{3}),
          \bibinfo{pages}{1--23}
         (\bibinfo{year}{2014}).
  
  \bibitem[12]{G1}
  \bibinfo{author}{\bibfnamefont{M.~ L.}~\bibnamefont{Goldman}},
  \bibinfo{title}{"Imbedding theorems for anisotropic Nikol\`{}skii - Besov           spaces with moduli of continuity of general form"},
  \bibinfo{journal}{Proc. Steklov Inst. Math.}
  \textbf{\bibinfo{volume}{1}} 
  (\bibinfo{number}{170}),
   \bibinfo{pages}{86--104}
   (\bibinfo{year}{1987}).
   
      
            
      \bibitem[13]{G4}
      \bibinfo{author}{\bibfnamefont{M.~ L.}~\bibnamefont{Goldman}},
      \bibinfo{title}{"Imbedding with different metrics for spaces of Calderon
    type"},
      \bibinfo{journal}{Proc. Steklov Inst. Math.}
      \textbf{\bibinfo{volume}{181}},
     \bibinfo{pages}{75--101}
     (\bibinfo{year}{1989}).
     
 
  \bibitem[14]{G6}
       \bibinfo{author}{\bibfnamefont{M.~ L.}~\bibnamefont{Goldman}},
       \bibinfo{title}{"An imbedding criterion for different metrics for isotropic Besov spaces with arbitrary moduli of continuity"},
       \bibinfo{journal}{Proc. Steklov Inst. Math.}
       \textbf{\bibinfo{volume}{2}},
      \bibinfo{pages}{155--181}
      (\bibinfo{year}{1994}).
      
       \bibitem[15]{He}
             \bibinfo{author}{\bibfnamefont{C.~ S.}~\bibnamefont{Herz}},
             \bibinfo{title}{"Lipschitz spaces and Bernstein theorem of absolutely convergent Fourier transforms"},
             \bibinfo{journal}{J. Math. Mech. }
             \textbf{\bibinfo{volume}{18}}
             (\bibinfo{number}{4}),
            \bibinfo{pages}{283--323}
            (\bibinfo{year}{1968}).
          
       \bibitem[16]{LiK}
            \bibinfo{author}{\bibfnamefont{Hongliang}~\bibnamefont{Li}}, \bibnamefont{and}
            \bibinfo{author}{\bibfnamefont{A.}~\bibnamefont{Kaminska}},
            \bibinfo{title}{"Boundedness and compactness of Hardy operators on Lorentz-type spaces"},
           \bibinfo{journal}{Math. Nachr.}
            \textbf{\bibinfo{volume}{290}}
            (\bibinfo{number}{5--6}), 
               \bibinfo{pages}{852--866}
             (\bibinfo{year}{2017}).
      
       \bibitem[17]{KL}
       \bibinfo{author}{\bibfnamefont{G.~A. }~\bibnamefont{Kalyabin}}, \bibnamefont{and}
       \bibinfo{author}{\bibfnamefont{P.~ I.}~\bibnamefont{Lizorkin}},
       \bibinfo{title}{"Spaces of functions of generalized smoothness"},
      \bibinfo{journal}{Math. Nachr.}
       \textbf{\bibinfo{volume}{133}},
          \bibinfo{pages}{7--32}
        (\bibinfo{year}{1987}).
        
           \bibitem[18]{KPS}
            \bibinfo{author}{\bibfnamefont{S.~G.}~\bibnamefont{Krein}},
            \bibinfo{author}{\bibfnamefont{Yu.~I.}~\bibnamefont{Petunin}}, \bibnamefont{and}
            \bibinfo{author}{\bibfnamefont{E.~M.}~\bibnamefont{Semenov}},
              \bibinfo{booktitle}{Interpolation of linear operators,}
             \bibinfo{publisher}{Nauka},
              \bibinfo{address}{Moscow}, 
              (\bibinfo{year}{1978}) [in Russian]. 
        
         \bibitem[19]{Nik}
              \bibinfo{author}{\bibfnamefont{S.~M.}~\bibnamefont{Nikolskii}},
             \bibinfo{booktitle}{Approximation of functions of several variables and imbedding theorems,}
             \bibinfo{publisher}{Springer},
             \bibinfo{address}{Berlin}, 
               (\bibinfo{year}{1975}).
             
     \bibitem[20]{Tri}
                 \bibinfo{author}{\bibfnamefont{H.}~\bibnamefont{Triebel}},
                \bibinfo{booktitle}{Interpolation theory, Function spaces, Differential operators,}
                \bibinfo{publisher}{VEB Deutscher Verlag der Wissenschaften},
                \bibinfo{address}{Berlin}, 
                  (\bibinfo{year}{1978}).
                  
  \bibitem[21]{U1}
               \bibinfo{author}{\bibfnamefont{P.~L.}~\bibnamefont{Ulyanov}},
               \bibinfo{title}{"On imbedding of some classes of functions $H_p^{\omega}$"},
               \bibinfo{journal}{Izvestiya Acad. Sci. SSSR}
               \textbf{\bibinfo{volume}{32}},
              \bibinfo{pages}{649--686}
              (\bibinfo{year}{1968}) [in Russian].
             
  \bibitem[22]{U2}
      \bibinfo{author}{\bibfnamefont{P.~L.}~\bibnamefont{Ulyanov}},
     \bibinfo{title}{"Imbedding theorems and relations between the best 
               approximations (moduli of continuity) in different metrics"},
   \bibinfo{journal}{Math. Sb.}
   \textbf{\bibinfo{volume}{81}}
  (\bibinfo{number}{1}),
 \bibinfo{pages}{104--131}
  (\bibinfo{year}{1970}) [in Russian].


\end{thebibliography}

\end{document}